\documentclass[11pt, twoside]{article}
\usepackage{bcc-cambridge-class}

\usepackage{tikz}
\usepackage{float}
\usepackage{mathtools}

\addtolength{\oddsidemargin}{-24pt}

\let\a\relax
\let\b\relax
\let\dim\relax
\let\div\relax
\let\N\relax
\let\span\relax
\let\t\relax
\let\v\relax
\let\S\relax

\DeclareMathOperator{\a}{{\mathbf a}}  
\DeclareMathOperator{\b}{{\mathbf b}}  
\DeclareMathOperator{\e}{{\mathbf e}}  
\DeclareMathOperator{\m}{{\mathbf m}}  
\DeclareMathOperator{\t}{{\mathbf t}}  
\DeclareMathOperator{\v}{{\mathbf v}}  
\DeclareMathOperator{\x}{{\mathbf x}}  
\DeclareMathOperator{\y}{{\mathbf y}}  

\DeclareMathOperator{\M}{{\mathsf M}}

\DeclareMathOperator{\F}{{\mathcal F}}  
\DeclareMathOperator{\G}{{\mathcal G}}  
  
\DeclareMathOperator{\S}{{\mathcal S}}

\DeclareMathOperator{\CC}{{\mathbb C}}  
\DeclareMathOperator{\FF}{{\mathbb F}}  

\DeclareMathOperator{\RR}{{\mathbb R}}  
\DeclareMathOperator{\ZZ}{{\mathbb Z}}

\DeclareMathOperator{\cone}{{\mathrm cone}}  
\DeclareMathOperator{\conv}{{\mathrm conv}}  
\DeclareMathOperator{\csm}{{\mathrm csm}}  

\DeclareMathOperator{\dim}{{\mathrm dim}}  
\DeclareMathOperator{\div}{{\mathrm div}}  
\DeclareMathOperator{\mult}{{\mathrm mult}}
\DeclareMathOperator{\span}{{\mathrm span}}  
\DeclareMathOperator{\trop}{{\mathrm trop}}  

\DeclareMathOperator{\Hom}{{\mathrm Hom}}  
\DeclareMathOperator{\MW}{{\mathrm MW}}  
\DeclareMathOperator{\N}{\mathrm N}   
\DeclareMathOperator{\Newt}{{\mathrm Newt}}  
\DeclareMathOperator{\PP}{{\mathrm PP}}  
\DeclareMathOperator{\Sym}{\mathrm Sym}   
\DeclareMathOperator{\Tr}{{\mathrm Tr}}


\bcctitle{Intersection theory of matroids: \\  variations on a theme}
\bccshorttitle{Intersection theory of matroids}

\bccname{Federico Ardila--Mantilla}
\bccshortname{Federico Ardila--Mantilla} 

\bccaddressa{San Francisco State University \\ 1600 Holloway Avenue \\ San Francisco, CA 94132 \\ United States}
\bccemaila{federico@sfsu.edu}

\begin{document}

\makebcctitle


\begin{abstract}
Chow rings of toric varieties, which originate in intersection theory, feature a rich combinatorial structure of independent interest. 
We survey four different ways of computing in these rings, due to 
Billera, 
Brion, 
Fulton--Sturmfels, and
Allermann--Rau.
We illustrate the beauty and power of these methods by giving four proofs of Huh and Huh--Katz's formula 
$\mu^k(\M) = \deg_{\M}(\alpha^{r-k} \beta^k)$
for the coefficients of the reduced characteristic polynomial of a matroid $\M$ as the mixed intersection numbers of the hyperplane and reciprocal hyperplane classes $\alpha$ and $\beta$ in the Chow ring of $\M$. Each of these proofs sheds light on a different aspect of matroid combinatorics, and provides a framework for further developments in the intersection theory of matroids.

Our presentation is combinatorial, and does not assume previous knowledge of toric varieties, Chow rings, or intersection theory. 
This survey was prepared for the Clay Lecture to be delivered at the 2024 British Combinatorics Conference.
\end{abstract}


%

\section{Introduction}\label{sec:intro}

Our starting point is the \defword{chromatic polynomial} $\chi_G(t)$ of a graph $G=(V,E)$. 
For a positive integer $q$,
\[
\chi_G(q) \coloneqq \text{ number of proper vertex-colorings of G with $q$ colors},
\]
where a coloring is \defword{proper} if no two neighboring vertices have the same color. 
For example, the chromatic polynomial of the graph below is $\chi_G(q) = q(q-1)^2(q-2).$

\begin{figure}[h]
 \begin{center}
    \includegraphics[height=2cm]{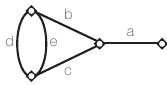}
  \caption{  \label{fig:graph} A graph $G$ with $\chi_G(q) = q^4-4q^3 +5q^2- 2q.$ and  $\mu^0=1, \mu^1=3, \mu^2=2$.}
 \end{center}
\end{figure}

More generally, the \defword{characteristic polynomial} $\chi_{\M}(t)$ of a matroid $\M=(E,r)$ is
\begin{equation} \label{eq:whitney} \chi_{\M}(q) \coloneqq \sum_{A \subseteq E} (-1)^{|A|} q^{r - r(A)}. 
\end{equation} It is one of the most important invariants of a matroid; it is introduced in detail in Section \ref{sec:matroid} and \cite[Sections 6, 7]{yomethods}.
The characteristic polynomial generalizes the chromatic polynomial in the sense that if $\M(G)$ is the cycle matroid of a graph $G$ that has $c$ connected components, then $\chi_G(q) = q^c\chi_{\M(G)}(q)$. 
This polynomial is a multiple of $q-1$, and we define the \defword{reduced characteristic polynomial} of $\M$ to be
\[
\overline{\chi}_{\M}(q) \coloneqq \frac{\chi_{\M}(q)}{q-1} = \mu^0 q^r - \mu^1 q^{r-1} + \cdots + (-1)^r \mu^r q^0
\]
where $r+1$ is the rank of $\M$. In the example above, $\overline{\chi}_{\M}(q) = q^2-3q+2$. 

It is not too difficult to prove recursively that the numbers $\mu^0, \mu^1, \ldots, \mu^r$ are non-negative. A combinatorialist then asks: Do they count something? An algebraic combinatorialist then asks: Do they have an algebraic, geometric, or topological interpretation? Such questions often give rise to a deeper understanding of the objects under study. In this case, and in numerous others, they lead to proofs of long-standing conjectures for which no purely combinatorial proof is known.

\subsection{Theme}

Our recurring theme will be Huh \cite{Huh} and Huh-Katz \cite{HuhKatz}'s remarkable interpretation of $\mu^0, \ldots, \mu^r$.  
Beautiful in its own right, their Theorem \ref{thm:main} also lies at the heart of the celebrated proof of the conjecture that this sequence is log-concave \cite{AdiprasitoHuhKatz}.

Let $\M$ be a matroid  of rank $r+1$ on a set $E$ with $n+1$ elements.
The \defword{Chow ring}
$A(\M)$ is the $\RR$-algebra generated by variables $x_F$ for each non-empty proper flat, with relations
\begin{eqnarray*}
x_Fx_G=0 && \text{ for any flats $F, G$ such that $F \subsetneq G$ and $F \supsetneq G$}, \\
\sum_{F \ni i}x_F = \sum_{F \ni j} x_F && \text{ for any elements $i, j \in E$}.
\end{eqnarray*}
One can show that the Chow ring is graded $A(\M) = A^0(\M) \oplus \cdots \oplus A^r(\M)$, and that there is a canonical isomorphism $\deg_{\M}: A^r(\M) \xrightarrow{\sim} \mathbb{R}$ called the \defword{degree map} \cite{AdiprasitoHuhKatz}. 

Consider the following two elements of $A^1(\M)$, which we call the \defword{hyperplane} and \defword{reciprocal hyperplane} classes:
\[
\alpha = \alpha_i =  \sum_{i \in F} x_F, \qquad
\beta = \beta_i =  \sum_{i \notin F} x_F.
\]
One readily verifies that they do not depend on $i$. 

\bigskip

\framebox{
\begin{minipage}{12.5cm}

\begin{theorem}\label{thm:main}
Let $\M$ be a matroid  of rank $r+1$. Let $\alpha, \beta$ be the  hyperplane and reciprocal hyperplane classes in the Chow ring $A(\M)$. Then
\[
\deg_{\M}(\alpha^{r-k} \beta^k) = \mu^k(\M) \qquad \text{ for } 0 \leq k \leq r.
\]
\end{theorem}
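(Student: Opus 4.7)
My plan is to induct on $|E|$ by matching deletion–contraction recursions on both sides. The base case, in which $\M$ is of small rank or contains a loop or a coloop, is handled directly: the loop case forces $\beta_{\M}=0$ and also $\mu^k(\M)=0$ for all $k$ (since $\chi_{\M}\equiv 0$), while the coloop case reduces cleanly to a smaller matroid.

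For the inductive step, fix $e \in E$ that is neither a loop nor a coloop. Whitney's relation
\[
\chi_{\M}(q) \;=\; \chi_{\M\setminus e}(q) \;-\; \chi_{\M/e}(q),
\]
divided by $q-1$ and read coefficient by coefficient, yields
\[
\mu^k(\M) \;=\; \mu^k(\M\setminus e) \;+\; \mu^{k-1}(\M/e),
\]
with the convention $\mu^{-1}\equiv 0$. It therefore suffices to prove the matching algebraic recursion
\[
\deg_{\M}(\alpha^{r-k}\beta^k) \;=\; \deg_{\M\setminus e}(\alpha^{r-k}\beta^k) \;+\; \deg_{\M/e}(\alpha^{r-k}\beta^{k-1}).
\]
To obtain this I would use natural maps relating $A(\M)$ to $A(\M\setminus e)$ and $A(\M/e)$: a pullback $\pi^*\colon A(\M\setminus e)\to A(\M)$ sending $\alpha_{\M\setminus e}$ to $\alpha_{\M}$, and a restriction-to-divisor map $A(\M)\to A(\M/e)$. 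One would then show that $\beta_{\M}$ decomposes as $\pi^*\beta_{\M\setminus e}$ plus a correction supported on the flats of $\M$ containing $e$; expanding $\alpha^{r-k}\beta^k$ via this decomposition and invoking the projection formula for the degree map should produce exactly the two summands on the right.

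I expect the principal obstacle to be precisely this structural comparison of Chow rings: the proper flats of $\M$ do not split cleanly into flats of $\M\setminus e$ and flats of $\M/e$, and $A(\M)$ sits on top of $A(\M\setminus e)$ in a subtle way that the semi-small decomposition encodes cleanly but that is not visible from the raw generators-and-relations presentation. An appealing alternative is to sidestep induction altogether by realizing $\alpha$ and $\beta$ in one of the concrete combinatorial models of the Chow ring mentioned in the introduction---Billera's piecewise polynomial functions, Fulton–Sturmfels Minkowski weights on the Bergman fan, or Allermann–Rau tropical cycles---and computing $\alpha^{r-k}\beta^k$ directly in that model, matching the result term by term with Whitney's sum~\eqref{eq:whitney} for $\mu^k(\M)$.
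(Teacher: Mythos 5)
Your overall strategy---induction on $|E|$ via the deletion--contraction recursion $\mu^k(\M) = \mu^k(\M\backslash e) + \mu^{k-1}(\M/e)$ of \eqref{eq:mudelcont}---is a legitimate route: the paper's second proof is itself a deletion--contraction argument, and the paper points to Dastidar--Ross for a proof at the level of Chow rings close to what you envision. However, as written your argument has a genuine gap: the entire inductive step is deferred. You assert that there is a pullback $\pi^*\colon A(\M\backslash e)\to A(\M)$ carrying $\alpha$ to $\alpha$, a restriction map $A(\M)\to A(\M/e)$, and a decomposition of $\beta_{\M}$ as $\pi^*\beta_{\M\backslash e}$ plus a correction supported on flats containing $e$, from which the projection formula ``should'' produce the two summands---but none of this is established, and you yourself flag it as the principal obstacle. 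Those claims are exactly where the work lies: one must check that the candidate map (e.g.\ $x_F\mapsto x_F + x_{F\cup e}$, with non-flats contributing $0$) respects the relations of the generators-and-relations presentation, that it intertwines the three degree maps with the right normalization, and that the degree of the correction term times $\alpha^{r-k}\beta^{k-1}$ is exactly $\deg_{\M/e}(\alpha^{r-k}\beta^{k-1})$ rather than a combination over further minors. This is the content of the semi-small decomposition you allude to, and it is not a formality; a proposal that names the obstacle without resolving it is not yet a proof. (The base cases also need more care than stated: when $e$ has a parallel element, $\M/e$ acquires loops and the Chow-ring conventions of the paper, which assume looplessness, must be adjusted.)

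For comparison, the paper sidesteps this difficulty in two different ways, both along the lines of the ``alternative'' you mention in your last paragraph. Its first proof needs no induction at all: it expands $\alpha^{r-k}\beta^k$ lexicographically into square-free flag monomials $x_{\F}$, each of degree $1$, and identifies the surviving flags as the maximal chains of $L_{\M}$ whose Jordan--H\"older word has descent set $[k]$, which by Stanley's rank-selection theorem \eqref{eq:muD} are counted by $\mu^k(\M)$. Its second proof does run your deletion--contraction recursion, but entirely inside the ambient braid fan $\Sigma_E$: representing $\alpha$, $\beta$, and $\Sigma_{\M}$ by explicit piecewise polynomials, the degree becomes a sum of rational functions indexed by permutations of $E$, and grouping the permutations according to where $e$ is inserted yields the recursion by a telescoping identity---no comparison of the abstract rings $A(\M)$, $A(\M\backslash e)$, $A(\M/e)$ is ever needed. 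To salvage your plan, either complete the Chow-ring comparison in the style of Dastidar--Ross, or move the whole computation into one of these concrete models where the bookkeeping closes.
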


\end{minipage}}

\bigskip

The Chow ring $A(\M)$ has remarkable Hodge-theoretic properties \cite{AdiprasitoHuhKatz} surveyed in \cite{HuhICM, yoICM, BakeronHuh, EuronHuh}.
In particular, $A(\M)$ satisfies the \defword{Hodge--Riemann relations}, which give
\[
\deg_{\M}(\ell_1 \ell_2 \ell_3 \cdots \ell_d)^2 \ge \deg_{\M}(\ell_1 \ell_1 \ell_3 \cdots \ell_d) \deg_{\M}(\ell_2 \ell_2 \ell_3 \cdots \ell_d),
\]
for any $\ell_1,\ell_2,\ldots,\ell_d$ in a certain cone $\mathcal{K}(\M) \subseteq A^1(\M)$ whose closure contains $\alpha$ and $\beta$. In light of Theorem \ref{thm:main}, this proves the following inequalities conjectured by  Rota \cite{Rota}, Heron \cite{Heron}, and Welsh \cite{Welsh} in the 1970s:
\[
(\mu^k)^2 \geq \mu^{k+1}\mu^{k-1} \qquad \text{ for } 1 \leq k \leq r-1. 
\]

This survey focuses on the combinatorial aspects of this program:

\begin{qn}
How does one discover and prove combinatorially interesting formulas in Chow rings like Theorem \ref{thm:main}? 
\end{qn}

This question fits within the framework of intersection theory of toric varieties, in ways that can be understood combinatorially. 
The Chow ring $A(X_\Sigma)$ of a toric variety $X_{\Sigma}$ corresponding to a rational polyhedral fan $\Sigma$ is a beautifully rich object that can be understood from several different points of view. We will present four, due to 
Billera, 
Brion, 
Fulton--Sturmfels, and
Allermann--Rau
 \cite{Billera, Brion, FultonSturmfels, AllermannRau}. Each of these points of view gives us a different ways to compute in a Chow ring, and teaches us different things about the objects at hand. This machinery is relevant to Theorem \ref{thm:main} because the Chow ring of a matroid $\M$ equals the Chow ring of the toric variety $X_{\Sigma_{\M}}$ and is closely related to the permutahedral toric variety $X_{\Sigma_E}$,
 where $\Sigma_E$ and  $\Sigma_{\M}$ are the \emph{matroid fan} of $\M$ 
 and the  \emph{braid fan} of $E$, discussed in detail in Sections \ref{subsec:braidfan} and \ref{subsec:matroid}. 
 
Our presentation will be combinatorial, and will not assume previous knowledge of toric varieties, Chow rings, or intersection theory. A familiarity with the basics of enumerative matroid theory will be helpful; see for example \cite{yomethods, Bjorner, Oxley}.

This survey is organized as follows. In Section \ref{sec:toric} we discuss the general intersection theory of simplicial rational fans $\Sigma$ and toric varieties $X_\Sigma$, giving four different combinatorial points of view on the Chow ring $A(\Sigma) = A(X_\Sigma)$. We pay special attention to the Chow ring of the braid fan $\Sigma_E$ for a finite set $E$.
In Section \ref{sec:matroid} we discuss some basic aspects of the intersection theory of matroids. The general theory gives us four different ways to think about the Chow ring of a matroid $\M$. We illustrate each one of these approaches by using it to give a different proof of Theorem \ref{thm:main}. 

\section{Intersection theory of toric varieties: a case study}\label{sec:toric}

Intersection theory studies how subvarieties of an algebraic variety $X$ intersect. For example, Bezout's theorem tells us that two generic plane curves of degrees $m$ and $n$ intersect at $mn$ points. We want a robust theory that will keep track of multiplicities correctly, and where the answer to such intersection questions does not change under rational equivalence. The Chow ring $A(X)$ provides an algebraic framework to carry out such computations. 
Because this ring encodes the answers to very subtle questions, it is generally an unwieldy object.

The situation is much better behaved when $X=X_\Sigma$ is the toric variety associated to a simplicial rational fan $\Sigma$. In this case, the Chow ring $A(X_\Sigma)$ can be described entirely in terms of the fan $\Sigma$ in several ways. This leads to algebraic, geometric, and combinatorial methods for computing in $A(X_\Sigma)$, and to combinatorial results of independent interest. Those methods and results are the subjects of this survey.

Let $\N_{\mathbb{Z}} \cong \ZZ^n$ be a lattice and $\N=\mathbb{R} \otimes \N_\mathbb{Z} \cong \RR^n$ the corresponding real vector space.
A \defword{rational cone} $\{\lambda_1 \v_1 + \cdots + \lambda_k \v_k \, : \, \lambda_1, \ldots, \lambda_k \geq 0\}$ is a cone in $\N$  generated by finitely many lattice vectors $\v_1, \ldots, \v_k \in \N_{\ZZ}$; it is \defword{strongly convex} if it contains no lines.
A \defword{rational fan} $\Sigma$ in $\N$ is a set of strongly convex rational cones that are glued along common faces; that is, any face of a cone in $\Sigma$ is a cone in $\Sigma$, and the intersection of any two cones in $\Sigma$ is a cone in $\Sigma$. We say a fan $\Sigma$ is \defword{simplicial} if every $d$-dimensional cone is generated by $d$ vectors, \defword{unimodular} if those $d$ vectors always form a basis for $\N_{\mathbb{Z}}$, 
 and \defword{complete} if the union of the cones in $\Sigma$ is all of $\N$. We say $\Sigma$ is \emph{pure} if all maximal cones have the same dimension, and write $\Sigma(d)$ for the set of $d$-dimensional cones.
 A rational fan $\Sigma$ in $\N$ determines a toric variety $X=X_{\Sigma}$; for details see \cite{CoxLittleSchenck, Fulton}.

The goal of this section is to explain the following theorem. After explaining each of its parts, we use it to compute explicitly the Chow ring of the two-dimensional braid fan. 

\begin{theorem}
Let $\Sigma$ be a complete simplicial rational fan in  $\N=\mathbb{R} \otimes \N_\mathbb{Z}$. 
The following rings are isomorphic:

\begin{enumerate}
\item The quotient $A(\Sigma) = S(\Sigma)/(I(\Sigma)+J(\Sigma))$ where 
\begin{eqnarray*}
S(\Sigma) &=& \RR[x_\rho \, : \, \rho \text{ is a ray of } \Sigma]/(I(\Sigma)+J(\Sigma)), \\
I(\Sigma) &=& \langle{ x_{\rho_1} \cdots x_{\rho_k} \, : \, \rho_1, \ldots, \rho_k \text{ do not generate a cone of } \Sigma \rangle},  \\
J(\Sigma) &=& \langle{\sum_{\rho \text{ ray of } \Sigma} \ell(\e_\rho) x_\rho \, : \, \ell  \text{ is a linear function on } \N \rangle}.
\end{eqnarray*}

\item The ring $\PP(\Sigma)/\langle \N^\vee \rangle$ of piecewise polynomial functions on $\Sigma$ modulo the ideal generated by the space  $N^\vee$  of (global) linear functions on $N$.

\item The ring $\MW(\Sigma)$ of Minkowski weights on $\Sigma$ under stable intersection.

\item The ring $\MW(\Sigma)$ of Minkowski weights on $\Sigma$ under tropical intersection.

\item The cohomology ring of the toric variety $X(\Sigma)$. 

\item The Chow ring of the toric variety $X(\Sigma)$.
\end{enumerate}
When $\Sigma$ is unimodular, there are analogous isomorphisms over $\ZZ$.
\end{theorem}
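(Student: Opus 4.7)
The plan is to establish the six isomorphisms in two stages: first, identify the four combinatorial descriptions (1)--(4) with each other via explicit constructions; then use classical toric geometry to identify (1) with the algebro-geometric rings (5) and (6). The isomorphism (1)$\cong$(6) is the Danilov--Jurkiewicz presentation of the Chow ring of a simplicial toric variety, and (5)$\cong$(6) follows because the torus-orbit stratification of $X_\Sigma$ is an affine paving, forcing odd cohomology to vanish and the cycle class map to be an isomorphism (with $\RR$-coefficients always, and with $\ZZ$-coefficients when $\Sigma$ is unimodular, since then $X_\Sigma$ is smooth). The combinatorial heart of the theorem is therefore the chain (1)$\cong$(2)$\cong$(3)$\cong$(4).

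For (1)$\cong$(2), which is Billera's theorem, the idea is to define for each ray $\rho$ of $\Sigma$ the \emph{Courant function} $\varphi_\rho \in \PP(\Sigma)$: the unique piecewise linear function on $\Sigma$ with $\varphi_\rho(\e_\rho) = 1$ and $\varphi_\rho(\e_{\rho'}) = 0$ for every other ray $\rho'$. I would send $x_\rho \mapsto \varphi_\rho$. This kills $I(\Sigma)$ because a product $\varphi_{\rho_1}\cdots \varphi_{\rho_k}$ vanishes on every cone of $\Sigma$ when the $\rho_i$ do not together span one, and it kills $J(\Sigma)$ because any global linear $\ell \in \N^\vee$ equals $\sum_\rho \ell(\e_\rho)\varphi_\rho$ as a piecewise polynomial. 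Surjectivity follows from the fact that on each simplicial maximal cone $\sigma$, the $\varphi_\rho$ with $\rho \subseteq \sigma$ restrict to the dual basis of its ray generators and so generate the polynomial ring on $\sigma$. Injectivity is the subtle step; I would argue it by a Hilbert-series comparison, using a shelling of a simple polytope dual to $\Sigma$ to compute $\dim \PP(\Sigma)/\langle \N^\vee \rangle$ and matching it with the Danilov--Jurkiewicz computation of $\dim A(\Sigma)$.

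For (1)$\cong$(3), I would exploit the nondegenerate degree pairing $A^k(\Sigma) \times A^{n-k}(\Sigma) \to \RR$ furnished by the complete simplicial hypothesis. A Minkowski weight $w : \Sigma(k) \to \RR$ determines a functional on $A^{n-k}(\Sigma)$ by sending a monomial $x_{\rho_1}\cdots x_{\rho_{n-k}}$ to $w(\sigma)/\mult(\sigma)$ where $\sigma$ is the cone spanned by the $\rho_i$ (and to zero if they span none); the balancing condition at each $(k-1)$-cone is exactly what ensures compatibility with the relations in $J(\Sigma)$. Poincar\'e duality then yields an element of $A^k(\Sigma)$, giving the Fulton--Sturmfels isomorphism, and the ring structure transported to $\MW(\Sigma)$ coincides with the stable intersection product by direct calculation on transverse meets. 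Finally, (3)$\cong$(4) is the Allermann--Rau result that, on Minkowski weights supported on $\Sigma$, stable intersection (via generic translation) agrees with tropical intersection (via successive restriction by piecewise linear Cartier divisors). I expect this to be the main obstacle: both operations are bilinear, so the task reduces to matching local multiplicities cone by cone, but reconciling the two very different formalisms requires tracing how each handles the balancing condition and the multiplicities attached to subcones. In the unimodular case all the multiplicities $\mult(\sigma)$ are $1$, so every construction above is integral and yields the analogous isomorphisms over $\ZZ$.
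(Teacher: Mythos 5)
Your decomposition into the chain (1)$\cong$(2)$\cong$(3)$\cong$(4) via Courant functions, the degree pairing, and the stable-versus-tropical comparison, together with Danilov--Jurkiewicz and the affine paving for (5) and (6), is exactly how the paper organizes this theorem; the paper itself only sets up these dictionaries and defers the actual proofs to Billera, Fulton--Sturmfels, Allermann--Rau, Katz--Payne, Brion, and Danilov, so your sketch is, if anything, more explicit than the source. One caveat worth fixing: your injectivity argument for (1)$\cong$(2) rests on shelling ``a simple polytope dual to $\Sigma$,'' but a complete simplicial rational fan need not be polytopal, so that polytope may not exist. The Hilbert-series comparison can still be carried out, since the simplicial sphere underlying a complete simplicial fan is Cohen--Macaulay by Reisner's criterion (or one can import the dimension count $\dim A^k(\Sigma) = h_k(\Sigma)$ from the cohomology of $X_\Sigma$), but the shelling route as literally stated would fail for non-polytopal fans.
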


\setcounter{subsection}{-1}

\subsection{The braid fan}\label{subsec:braidfan}

For a finite set $E$, we let $\{\e_i \, : \, i \in E\}$ be the standard basis of $\RR^E$, and we write 
\[
\e_S \coloneqq \sum_{s \in S} \e_s \qquad \text{ for } S \subseteq E.
\]
The fans considered in this paper
will live in $\N_E \coloneqq \RR^E/\RR\e_E$. The image of $\e_S \in \RR^E$ in this quotient will also be denoted $\e_S \in \N_E$. We will often consider $E = [0,n]\coloneqq \{0, 1, \ldots, n\}$.

\begin{Def} Let $E$ be a finite set. 
The \defword{braid fan} $\Sigma_E$ in $\N_E \coloneqq \RR^E/\RR\e_E$ has
\begin{itemize}
\item rays: $\e_S$ for the nonempty proper subsets $\emptyset \subsetneq S \subsetneq E$
\item cones: $\sigma_{\S} = \cone(\e_{S_1}, \ldots, \e_{S_k})$ for the flags $\S=(\varnothing \subsetneq S_1 \subsetneq \cdots \subsetneq S_k \subsetneq E)$ 
 \end{itemize}
\end{Def}

The braid fan is the decomposition of $\N_E$ determined by the \defword{braid arrangement} in $\N_E$, which consists of the hyperplanes $t_i = t_j$ for $i, j \in E$. If $|E|=n+1$, the braid fan $\Sigma_E$ is $n$-dimensional, and has a facet $\sigma_{\S} = \sigma_{\pi} = \{\t \in \N_E \, : \, t_{s_0} \geq t_{s_1} \geq \cdots \geq t_{s_n}\}$ for each complete flag $\S=(\varnothing \subsetneq \{s_0\} \subsetneq \cdots \subsetneq \{s_0,s_1, \ldots, s_{n-1}\} \subsetneq E)$, or equivalently, each bijection $\pi: [0,n] \rightarrow E$ given by $\pi(i) = s_i$. Slightly abusing terminology, we will call $\pi$ a \emph{permutation} of $E$ and write $\pi=s_0 \ldots, s_n$.
It follows that the braid fan is complete, simplicial, and unimodular.

Figure \ref{fig:braidfan} shows the braid fan $\Sigma_E$ for $E=[0,2]=\{0,1,2\}$. It is the complete fan in $\N_E$ cut out by the braid arrangement consisting of the lines $t_0=t_1, t_1=t_2,$ and $t_2=t_0$ in $\N_E$.

\begin{figure}[H]
\centering
\scalebox{1.2}
{
\begin{tikzpicture}[root/.style={circle,draw,inner sep=1.2pt,fill=black},every node/.style={scale=0.7}]
\draw[style=thin,color=gray] (2,0) -- (-2,0);
\draw[style=thin,color=gray] (-1,-1.72) -- (1,1.72);
\draw[style=thin,color=gray] (1,-1.72) -- (-1,1.72);
\node at (0,-1.5) {$t_2>t_0>t_1$};
\node at (0,1.5) {$t_1>t_0>t_2$};
\node at (1.3,-0.7) {$t_0>t_2>t_1$};
\node at (-1.3,-0.7) {$t_2>t_1>t_0$};
\node at (1.3,0.7) {$t_0>t_1>t_2$};
\node at (-1.3,0.7) {$t_1>t_2>t_0$};
\node at (0,0) {};
\node at (.8,-1.42) [label={[label distance=5pt]300:{$\e_{02} = (1,0,1)$}}] {};
\node at (.8,1.42) [label={[label distance=5pt]60:{$\e_{01} = (1,1,0)$}}] {};
\node at (-.8,-1.42) [label={[label distance=5pt]240:{$(0,0,1)=\e_2$}}] {};
\node at (-.8,1.42) [label={[label distance=5pt]120:{$(0,1,0)=\e_1$}}] {};
\node at (1.9,0) [label=right:{$\e_0=(1,0,0)$}] {};
\node at (-1.9,0) [label=left:{ $ \,\,\, (0,1,1)=\e_{12}$}] {};
\node (1) at (3,0) {};
\node (2) at (4.5,0) {};
\end{tikzpicture}
}
\caption{The braid fan $\Sigma_{[0,2]}$.}\label{fig:braidfan}
\end{figure}
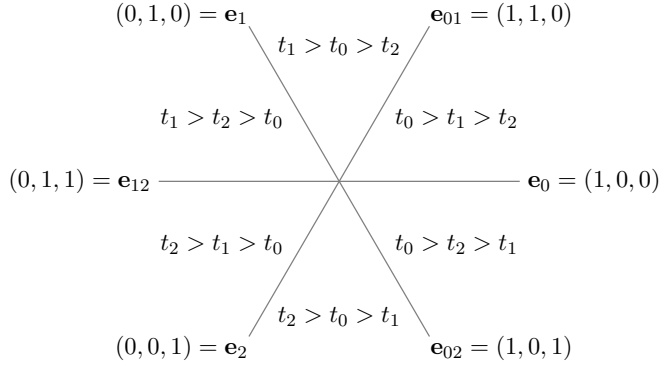

\noindent 

We will return to this picture many times in what follows; the reader may wish to keep it within reach. We will call its toric variety and Chow ring the \defword{permutahedral variety} and the \defword{permutahedral Chow ring}.

\subsection{The Chow ring as a quotient of a polynomial ring}\label{sec:A}

For the remainder of Section \ref{sec:toric}, $\Sigma$ will be a simplicial rational fan in $\N = \RR \otimes \N_{\ZZ}$. 

\medskip

\noindent \textbf{\textsf{The Chow ring.}}
The \defword{Chow ring} of $\Sigma$ is the  graded algebra 
\[
A(\Sigma) \coloneqq S(\Sigma)/(I(\Sigma)+J(\Sigma)),
\]
where
\begin{eqnarray*}
S(\Sigma) &=& \RR[x_\rho \, : \, \rho \text{ is a ray of } \Sigma]/(I(\Sigma)+J(\Sigma)), \\
I(\Sigma) &=& \langle{ x_{\rho_1} \cdots x_{\rho_k} \, : \, \rho_1, \ldots, \rho_k \text{ do not generate a cone of } \Sigma \rangle},  \\
J(\Sigma) &=& \langle{\sum_{\rho \text{ ray of } \Sigma} \ell(\e_\rho) x_\rho \, : \, \ell  \text{ is a linear function on } \N \rangle}.
\end{eqnarray*}
The ideal $I(\Sigma)$ is called the \defword{Stanley-Reisner ideal} of $\Sigma$ and $S(\Sigma)/I(\Sigma)$ is called its \defword{Stanley-Reisner ring}. In $J(\Sigma)$, it is sufficient to let $\ell$ range over a basis of the space $\N^\vee$ of linear functions on $\N$.

\begin{example} (The Chow ring $A(\Sigma_{[0,2]})$.)
Let us compute the Chow ring of the braid fan $\Sigma_E$ for $E=[0,2]$. 
We have
\begin{eqnarray*}
S(\Sigma_E) &=& \RR[x_0, x_1, x_2, x_{01}, x_{02}, x_{12}] \\
I(\Sigma_E) &=& \langle x_i x_j : i \neq j \rangle + \langle x_i x_{jk} : i,j,k \text{ distinct} \rangle + \langle x_{ij} x_{jk} : i,j,k \text{ distinct} \rangle \\
J(\Sigma_E) &=& \langle (x_0+x_{02}) - (x_1+x_{12}), (x_0+x_{01}) - (x_2+x_{12}) \rangle
\end{eqnarray*}
where we use $t_0-t_1$ and $t_0-t_2$ as a basis for $\N^\vee$ in the description of $J(\Sigma_E)$. 

We claim that $A=A(\Sigma_E)$ has degree $2$ and
\[
A^0 = \RR\{1\} \cong \RR^1, \qquad A^1 = \RR\{x_0, x_1, x_2, x_{12}\} \cong \RR^4, \qquad A^2 = \RR\{x_0x_{01}\} \cong \RR.
\] 
The description of $A^0$ is clear. 
The description of $A^1$ follows from the two linear relations in $J(\Sigma_E)$ that express $x_{01}$ and $x_{02}$ in terms of the four chosen generators.
To compute $A^2$, notice that 
\[
x_0^2 = x_0(x_2+x_{12}-x_{01}) = -x_0x_{01}, \qquad x_{01}^2 = x_{01}(x_2+x_{12}-x_{0}) = -x_0x_{01},
\]
and similarly for the squares of the other terms $x_ix_{ij}$. This implies that 
\begin{equation}\label{eq:[0,2]}
-x_0^2 = -x_1^2 = -x_2^2 = -x_{01}^2 = -x_{02}^2 = -x_{12}^2 = x_ix_{ij} \text{ for all } i \neq j.
\end{equation}
Thus $A^2$ is indeed generated by $x_0x_{01}$, and we have an isomorphism
\[
\deg: A^2 \simeq \RR,  \qquad \deg(x_ix_{ij}) = 1 \text{ for all facets $\sigma_{i \subset ij}$ of $\Sigma_{[0,2]}$}.
\]
Any monomial of degree $3$ can be reduced via \eqref{eq:[0,2]} to a square free monomial of degree $3$, which is in $I(\Sigma_E)$ and hence vanishes in $A(\Sigma_E)$.
\end{example}

\medskip

\noindent \textbf{\textsf{Computing degrees.}} When $\Sigma$ is complete, the Chow ring $A(\Sigma)$ is graded of degree $n$, and there is a canonical \defword{degree map}  $\deg: A^n(\Sigma) \simeq \RR$. If $\Sigma$ is unimodular, this map is characterized by the property that the degree of any facet monomial is 1: $\deg(x_{\sigma}) = 1$ for any facet $\sigma$, where $x_\sigma = \prod_{\rho \text{ ray}} x_\rho$. 
Any $f \in A^n(\Sigma)$ can be expressed as a linear combination of facet monomials \cite[Prop. 5.5]{AdiprasitoHuhKatz}, and this expression gives the degree of $f$.

\medskip

\noindent \textbf{\textsf{The hyperplane and reciprocal hyperplane classes.}}
We will pay special attention to
two special elements $\alpha, \beta$ in the degree one piece $A^1(\Sigma_E)$ of the permutahedral Chow ring:
\[
\alpha\coloneqq \alpha_i = \sum_{i \in S} x_S, \qquad \beta\coloneqq \beta_i = \sum_{i \notin S} x_S, \qquad \text{ for } i \in E.
\]
We invite the reader to check that these do not depend on the choice of $i \in E$.

\begin{example} (The degree of $\alpha \beta$ in $A(\Sigma_{[0,2]})$.)
For $E=[0,2]$ we have
\[
\begin{array}{rcccrcc}
\alpha 
\quad = \quad \alpha_0&=&x_0+x_{01}+x_{02} &\qquad  & \beta \quad =\quad  \beta_0&=&x_1+x_2+x_{12}  \\
\quad =\quad \alpha_1&=&x_1+x_{01}+x_{12} &\qquad&   \quad =\quad  \beta_1&=&x_0+x_2+x_{02}  \\
\quad =\quad \alpha_2&=&x_2+x_{02}+x_{12} & \qquad&  \quad =\quad  \beta_2&=&x_0+x_1+x_{01}. \\
\end{array}
\]
Let us compute the intersection degree of $\alpha$ and $\beta$. Using the relations in the Chow ring, we can write
\[
\alpha \beta = \alpha_0\beta_0 =  (x_0+x_{01}+x_{02})(x_1+x_2+x_{12}) = x_1x_{01}+x_2x_{02}.
\]
This implies that
\[
\deg (\alpha \beta) = 2.
\]
Note that a different choice of representatives, such as $\alpha_0 \beta_1$, leads to a more complicated computation. 
\end{example}

\subsection{The Chow ring in terms of piecewise polynomials}\label{sec:PP}

\noindent \textbf{\textsf{The Chow ring.}}
A \defword{piecewise polynomial} on $\Sigma$ is a continuous function on $\N$ whose restriction to each cone in $\Sigma$ agrees with a polynomial function.
Let $\PP(\Sigma)$ be the ring of piecewise polynomials on $\Sigma$, with pointwise addition and multiplication. Let $\langle \N^\vee \rangle$ be the ideal of $\PP(\Sigma)$ generated by the set $\N^\vee$ of (global) linear functions on $\N$. Thanks to work of Billera \cite{Billera},
the Chow ring of $\Sigma$ can be described as:
\[
A(\Sigma) \cong \PP(\Sigma)/\langle \N^\vee \rangle.
\]

\noindent \textbf{\textsf{The dictionary.}}
Billera \cite{Billera} constructed an isomorphism from the Stanley-Reisner ring $S(\Sigma)/I(\Sigma)$  to the algebra $\PP(\Sigma)$ of continuous piecewise polynomial functions on $\Sigma$, by identifying
  the variable $x_\rho$ with
  the piecewise linear  \defword{Courant function} on $\Sigma$ determined by the condition
\[
x_\rho(\e_{\rho'})=\begin{cases} 1, & \text{if $\rho$ is equal to $\rho'$}, \\ 0, &  \text{if $\rho$ is not equal to $\rho'$,} \end{cases} \qquad \text{ for each ray } \rho \text{ of } \Sigma.
\]
Conversely, this isomorphism identifies a piecewise linear function $\ell \in \PP(\Sigma)$ on $\Sigma$ with the linear form
\[
\ell=\sum_{\rho \text{ ray}} \ell(\e_\rho) x_\rho,
\]
and allows us to regard  the elements of $A(\Sigma)$ as equivalence classes of piecewise polynomial functions on $\Sigma$, modulo the
linear functions on $\Sigma$.

\begin{example} (The ring $A(\Sigma_{[0,2]})$)
Let us carry out this computation for the braid arrangement $\Sigma_{[0,2]}$, referring to Figure \ref{fig:braidfan}. The Courant functions representing the ray variables $x_0, x_1, x_2, x_{01}, x_{02}, x_{12}$ of the previous section are the following, where $t_{ij} \coloneqq  t_i-t_j$:

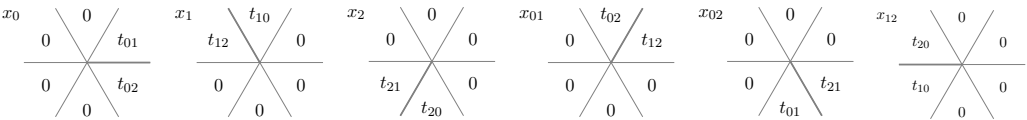
\begin{figure}[H]
\centering
\scalebox{.42}
{
\begin{tikzpicture}[root/.style={circle,draw,inner sep=1.2pt,fill=black},every node/.style={scale=1.5}]
\draw[style=thin,color=gray] (2,0) -- (-2,0);
\draw[style=ultra thick,color=gray] (2,0) -- (0,0);
\draw[style=thin,color=gray] (-1,-1.72) -- (1,1.72);
\draw[style=thin,color=gray] (1,-1.72) -- (-1,1.72);
\node at (0,-1.5) {$0$};
\node at (0,1.5) {$0$};
\node at (1.3,-0.7) {$t_{02}$};
\node at (-1.3,-0.7) {$0$};
\node at (1.3,0.7) {$t_{01}$};
\node at (-1.3,0.7) {$0$};
\node at (-1.7,1.5) [label=left:{{$x_0$}}] {};
\end{tikzpicture}
}
\scalebox{.42}
{
\begin{tikzpicture}[root/.style={circle,draw,inner sep=1.2pt,fill=black},every node/.style={scale=1.5}]
\draw[style=thin,color=gray] (2,0) -- (-2,0);
\draw[style=ultra thick,color=gray] (-1,1.72) -- (0,0);
\draw[style=thin,color=gray] (-1,-1.72) -- (1,1.72);
\draw[style=thin,color=gray] (1,-1.72) -- (-1,1.72);
\node at (0,-1.5) {$0$};
\node at (0,1.5) {$t_{10}$};
\node at (1.3,-0.7) {$0$};
\node at (-1.3,-0.7) {$0$};
\node at (1.3,0.7) {$0$};
\node at (-1.3,0.7) {$t_{12}$};
\node at (-1.7,1.5)[label=left:{{$x_1 $}}] {};
\end{tikzpicture}
}
\scalebox{.42}
{
\begin{tikzpicture}[root/.style={circle,draw,inner sep=1.2pt,fill=black},every node/.style={scale=1.5}]
\draw[style=thin,color=gray] (2,0) -- (-2,0);
\draw[style=ultra thick,color=gray] (-1,-1.72) -- (0,0);
\draw[style=thin,color=gray] (-1,-1.72) -- (1,1.72);
\draw[style=thin,color=gray] (1,-1.72) -- (-1,1.72);
\node at (0,-1.5) {$t_{20}$};
\node at (0,1.5) {$0$};
\node at (1.3,-0.7) {$0$};
\node at (-1.3,-0.7) {$t_{21}$};
\node at (1.3,0.7) {$0$};
\node at (-1.3,0.7) {$0$};
\node at (-1.7,1.5) [label=left:{{$x_2$}}] {};
\end{tikzpicture}
}
\scalebox{.42}
{
\begin{tikzpicture}[root/.style={circle,draw,inner sep=1.2pt,fill=black},every node/.style={scale=1.5}]
\draw[style=thin,color=gray] (2,0) -- (-2,0);
\draw[style=ultra thick,color=gray] (1,1.72) -- (0,0);
\draw[style=thin,color=gray] (-1,-1.72) -- (1,1.72);
\draw[style=thin,color=gray] (1,-1.72) -- (-1,1.72);
\node at (0,-1.5) {$0$};
\node at (0,1.5) {$t_{02}$};
\node at (1.3,-0.7) {$0$};
\node at (-1.3,-0.7) {$0$};
\node at (1.3,0.7) {$t_{12}$};
\node at (-1.3,0.7) {$0$};
\node at (-1.7,1.5) [label=left:{{$x_{01} $}}] {};
\end{tikzpicture}
}
\scalebox{.42}
{
\begin{tikzpicture}[root/.style={circle,draw,inner sep=1.2pt,fill=black},every node/.style={scale=1.5}]
\draw[style=thin,color=gray] (2,0) -- (-2,0);
\draw[style=ultra thick,color=gray] (1,-1.72) -- (0,0);
\draw[style=thin,color=gray] (-1,-1.72) -- (1,1.72);
\draw[style=thin,color=gray] (1,-1.72) -- (-1,1.72);
\node at (0,-1.5) {$t_{01}$};
\node at (0,1.5) {$0$};
\node at (1.3,-0.7) {$t_{21}$};
\node at (-1.3,-0.7) {$0$};
\node at (1.3,0.7) {$0$};
\node at (-1.3,0.7) {$0$};
\node at (-1.7,1.5) [label=left:{{$x_{02} $}}] {};
\end{tikzpicture}
}
\scalebox{.42}
{
\begin{tikzpicture}[root/.style={circle,draw,inner sep=1.2pt,fill=black},every node/.style={scale=1.25}]
\draw[style=thin,color=gray] (2,0) -- (-2,0);
\draw[style=ultra thick,color=gray] (0,0) -- (-2,0);
\draw[style=thin,color=gray] (-1,-1.72) -- (1,1.72);
\draw[style=thin,color=gray] (1,-1.72) -- (-1,1.72);
\node at (0,-1.5) {$0$};
\node at (0,1.5) {$0$};
\node at (1.3,-0.7) {$0$};
\node at (-1.3,-0.7) {$t_{10}$};
\node at (1.3,0.7) {$0$};
\node at (-1.3,0.7) {$t_{20}$};
\node at (-1.7,1.5) [label=left:{{$x_{12} $}}] {};
\end{tikzpicture}
}
\caption{The Courant functions $x_0, x_1, x_2, x_{01}, x_{02}, x_{12}$ on $\Sigma_{[0,2]}$. Each function $x_S$ equals $1$ on the marked primitive ray $\e_S$ and $0$ on the others.}\label{FigureBipermutohedralFan}
\end{figure}

As we saw in the previous section, $A^0$ is generated by the constant function $1$, $A^1$ is generated by $x_0, x_1, x_2, x_{01}$, and $A^2$ is generated by
\begin{figure}[H]
\centering
\scalebox{.42}
{
\begin{tikzpicture}[root/.style={circle,draw,inner sep=1.2pt,fill=black},every node/.style={scale=1.5}]
\draw[style=thin,color=gray] (2,0) -- (-2,0);
\draw[style=thin,color=gray] (-1,-1.72) -- (1,1.72);
\draw[style=thin,color=gray] (1,-1.72) -- (-1,1.72);
\draw[style=ultra thick,color=gray] (0,0) -- (2,0);
\draw[style=ultra thick,color=gray] (0,0) -- (1,1.72);
\node at (-1.9,0) [label=left:{$x_0x_{01} =$}] {};
\node at (0,-1.5) {$0$};
\node at (0,1.5) {$0$};
\node at (1.3,-0.7) {$0$};
\node at (-1.3,-0.7) {$0$};
\node at (1.3,0.7) {$t_{01}t_{12}$};
\node at (-1.3,0.7) {$0$};
\node at (.8,1.42) [label={[label distance=5pt]60:{$\e_{01}$}}] {};
\node at (1.9,0) [label=right:{$\e_0$.}] {};
\end{tikzpicture}
}
\end{figure}

\noindent
This expression for the generator $x_0x_{01}$ is supported on the chamber $\cone\{\e_0, \e_{01}\} = \{\t \in \N_E \, : \, t_0>t_1>t_2\}$. Its unique non-zero polynomial $t_{01}t_{12}$ is the product of the linear forms $t_0-t_1$ and $t_1-t_2$ defining the inequalities of the chamber. 

It is instructive to double check that two adjacent chambers (and hence any two chambers) give the same generator of $A^2$. The neighbor chamber  
$\cone\{\e_0, \e_{02}\} = \{\t \in \N_E \, : \, t_0>t_2>t_1\}$
separated by the wall $t_{12}=0$, gives generator $x_0x_{02}$. Their difference is
\begin{figure}[H]
\centering
\scalebox{.42}
{
\begin{tikzpicture}[root/.style={circle,draw,inner sep=1.2pt,fill=black},every node/.style={scale=1.5}]
\draw[style=thin,color=gray] (2,0) -- (-2,0);
\draw[style=thin,color=gray] (-1,-1.72) -- (1,1.72);
\draw[style=thin,color=gray] (1,-1.72) -- (-1,1.72);
\draw[style=ultra thick,color=gray] (0,0) -- (2,0);
\node at (-1.9,0) [label=left:{$x_0x_{01} - x_0x_{02} =$}] {};
\node at (0,-1.5) {$0$};
\node at (0,1.5) {$0$};
\node at (1.3,-0.7) {$-t_{02}t_{21}$};
\node at (-1.3,-0.7) {$0$};
\node at (1.3,0.7) {$t_{01}t_{12}$};
\node at (-1.3,0.7) {$0$};
\end{tikzpicture}
}
\scalebox{.42}
{
\begin{tikzpicture}[root/.style={circle,draw,inner sep=1.2pt,fill=black},every node/.style={scale=1.5}]
\draw[style=thin,color=gray] (2,0) -- (-2,0);
\draw[style=thin,color=gray] (-1,-1.72) -- (1,1.72);
\draw[style=thin,color=gray] (1,-1.72) -- (-1,1.72);
\draw[style=ultra thick,color=gray] (0,0) -- (2,0);
\node at (-1.9,0) [label=left:{= $t_{12} \, \, \cdot$ }] {};
\node at (0,-1.5) {$0$};
\node at (0,1.5) {$0$};
\node at (1.3,-0.7) {$t_{02}$};
\node at (-1.3,-0.7) {$0$};
\node at (1.3,0.7) {$t_{01}$};
\node at (-1.3,0.7) {$0$};
\node at (1.9,0) [label=right:{ = 0}] {};
\end{tikzpicture}
}
\end{figure}

\noindent
since it is the product of the linear function $t_{12}$ of the wall separating them and a piecewise polynomial function: we have $t_{01}=0$ on $\e_{01}$, $t_{01}=t_{02}$ on $\e_0$, and $t_{02}=0$ on $\e_{02}$.
This generalizes to any two neighbor chambers in any braid fan, and further, in any simplicial rational fan.
\end{example}

\noindent \textbf{\textsf{Computing degrees.}}
There is a very elegant way to compute the degree of an element $f \in A^n(\Sigma)$  given by a piecewise polynomial $f=(f_\sigma \, : \, \sigma \in \Sigma(n)$). 
To describe it, we first associate a rational function to each facet $\sigma$ of $\Sigma$.
If $\sigma$ is simplicial and unimodular, it is generated by $n$ inequalities $f_1(x) \geq 0, \ldots, f_n(x) \geq 0$, where $\{f_1, \ldots, f_n\}$ is the basis dual to the rays generating $\sigma$. This determines a rational function $e_\sigma \coloneqq 1/(f_1 \cdots f_n)$ in $\Sym^\pm(N^\vee)$. 
In general, we can triangulate $\sigma$ into simplicial unimodular cones $\sigma_1, \ldots, \sigma_n$ and define $e_\sigma \coloneqq  e_{\sigma_1}+\cdots+e_{\sigma_n}$, which turns out to be independent of the triangulation \cite{BrionEquivariant}. We then have
\[ 
\deg(f) = \sum_{\sigma \in \Sigma(n)} e_\sigma f_\sigma.
\]
It is pleasant and not a priori obvious that this is always a constant, after significant cancellation. It is not so difficult to prove it, though, by verifying that the above formula gives $\deg(x_\sigma)=1$ for every facet monomial and $0$ for every other square-free monomial.

\medskip

\noindent \textbf{\textsf{The hyperplane and reciprocal hyperplane classes.}}
The elements $\alpha$ and $\beta$ of the permutahedral Chow ring can be described by the following piecewise linear functions, for any $i \in E$:
\[
\alpha = \alpha_i = \max(t_i - t_j \, : \, j \in E), \qquad \beta = \beta_i = \max(t_j - t_i \, : \, j \in E). 
\]
For any $i \neq i'$ the function $\alpha_i-\alpha_{i'} = t_i-t_{i'}$ is linear, and hence in $ \N^\vee$, so $\alpha$ is well-defined.\footnote{It is tempting but incorrect to think that $t_i$ is linear so we can write $\alpha = max(- t_j \, : \, j \in E)$: in fact 
$t_i$ is not even a well defined function on the ambient space $\N = \RR^{[0,2]}/\RR\e_{[0,2]}$.}
To verify the formula for $\alpha_i$, notice that the value of $\max(t_i-t_j \, : \, j \in E)$ on $\e_S$ is $1$ if $i \in S$ and $0$ if $i \notin S$.
A similar argument works for $\beta$.

\begin{example} (The degree of $\alpha \beta$ in $A(\Sigma_{[0,2])}$.)
The special element $\alpha \in A(\Sigma_E)$ is given by the expressions 
$\alpha_0=x_0+x_{01}+x_{02}$,  
$\alpha_1=x_1+x_{01}+x_{12}$,  and
$\alpha_2=x_2+x_{02}+x_{12}$, which give: 
\begin{figure}[H]
\centering
\scalebox{.42}
{
\begin{tikzpicture}[root/.style={circle,draw,inner sep=1.2pt,fill=black},every node/.style={scale=1.5}]
\draw[style=thin,color=gray] (2,0) -- (-2,0);
\draw[style=thin,color=gray] (-1,-1.72) -- (1,1.72);
\draw[style=thin,color=gray] (1,-1.72) -- (-1,1.72);
\node at (-1.9,0) [label=left:{$\alpha = $}] {};
\node at (0,-1.5) {$t_{01}$};
\node at (0,1.5) {$t_{02}$};
\node at (1.3,-0.7) {$t_{01}$};
\node at (-1.3,-0.7) {$0$};
\node at (1.3,0.7) {$t_{02}$};
\node at (-1.3,0.7) {$0$};
\end{tikzpicture}
}
\scalebox{.42}
{
\begin{tikzpicture}[root/.style={circle,draw,inner sep=1.2pt,fill=black},every node/.style={scale=1.5}]
\draw[style=thin,color=gray] (2,0) -- (-2,0);
\draw[style=thin,color=gray] (-1,-1.72) -- (1,1.72);
\draw[style=thin,color=gray] (1,-1.72) -- (-1,1.72);
\node at (-1.9,0) [label=left:{ $=$}] {};
\node at (0,-1.5) {$0$};
\node at (0,1.5) {$t_{12}$};
\node at (1.3,-0.7) {$0$};
\node at (-1.3,-0.7) {$t_{10}$};
\node at (1.3,0.7) {$t_{12}$};
\node at (-1.3,0.7) {$t_{10}$};
\end{tikzpicture}
}
\scalebox{.42}
{
\begin{tikzpicture}[root/.style={circle,draw,inner sep=1.2pt,fill=black},every node/.style={scale=1.5}]
\draw[style=thin,color=gray] (2,0) -- (-2,0);
\draw[style=thin,color=gray] (-1,-1.72) -- (1,1.72);
\draw[style=thin,color=gray] (1,-1.72) -- (-1,1.72);
\node at (-1.9,0) [label=left:{$=$}] {};
\node at (0,-1.5) {$t_{21}$};
\node at (0,1.5) {$0$};
\node at (1.3,-0.7) {$t_{21}$};
\node at (-1.3,-0.7) {$t_{20}$};
\node at (1.3,0.7) {$0$};
\node at (-1.3,0.7) {$t_{20}$};
\node at (1.9,0) [label=right:{ $.$}] {};
\end{tikzpicture}
}
\end{figure}

\noindent These look different, but they are equal modulo global linear functions on $N$: the first two differ by $t_{01}$ and the latter two differ by $t_{12}$. Similarly, there are three natural piecewise linear representatives for $\beta$, namely $\beta_0, \beta_1, \beta_2$.
Let's compute the degree of $\alpha \beta$ in two ways, referring to Figure \ref{fig:braidfan} again. Since

\begin{figure}[H]
\centering
\scalebox{.42}
{
\begin{tikzpicture}[root/.style={circle,draw,inner sep=1.2pt,fill=black},every node/.style={scale=1.5}]
\draw[style=thin,color=gray] (2,0) -- (-2,0);
\draw[style=thin,color=gray] (-1,-1.72) -- (1,1.72);
\draw[style=thin,color=gray] (1,-1.72) -- (-1,1.72);
\node at (-1.9,0) [label=left:{$\alpha_0 \beta_0 = $}] {};
\node at (0,-1.5) {$t_{01}$};
\node at (0,1.5) {$t_{02}$};
\node at (1.3,-0.7) {$t_{01}$};
\node at (-1.3,-0.7) {$0$};
\node at (1.3,0.7) {$t_{02}$};
\node at (-1.3,0.7) {$0$};
\end{tikzpicture}
}
\scalebox{.42}
{
\begin{tikzpicture}[root/.style={circle,draw,inner sep=1.2pt,fill=black},every node/.style={scale=1.5}]
\draw[style=thin,color=gray] (2,0) -- (-2,0);
\draw[style=thin,color=gray] (-1,-1.72) -- (1,1.72);
\draw[style=thin,color=gray] (1,-1.72) -- (-1,1.72);
\node at (0,-1.5) {$t_{20}$};
\node at (0,1.5) {$t_{10}$};
\node at (1.3,-0.7) {$0$};
\node at (-1.3,-0.7) {$t_{20}$};
\node at (1.3,0.7) {$0$};
\node at (-1.3,0.7) {$t_{10}$};
\end{tikzpicture}
}
\scalebox{.42}
{
\begin{tikzpicture}[root/.style={circle,draw,inner sep=1.2pt,fill=black},every node/.style={scale=1.5}]
\draw[style=thin,color=gray] (2,0) -- (-2,0);
\draw[style=thin,color=gray] (-1,-1.72) -- (1,1.72);
\draw[style=thin,color=gray] (1,-1.72) -- (-1,1.72);
\node at (-1.9,0) [label=left:{$=$}] {};
\node at (0,-1.5) {$t_{01}t_{20}$};
\node at (0,1.5) {$t_{02}t_{10}$};
\node at (1.3,-0.7) {$0$};
\node at (-1.3,-0.7) {$0$};
\node at (1.3,0.7) {$0$};
\node at (-1.3,0.7) {$0$};
\node at (1.9,0) [label=right:{, \qquad \qquad }] {};
\end{tikzpicture}
}
\scalebox{.42}
{
\begin{tikzpicture}[root/.style={circle,draw,inner sep=1.2pt,fill=black},every node/.style={scale=1.5}]
\draw[style=thin,color=gray] (2,0) -- (-2,0);
\draw[style=thin,color=gray] (-1,-1.72) -- (1,1.72);
\draw[style=thin,color=gray] (1,-1.72) -- (-1,1.72);
\node at (-1.9,0) [label=left:{$\alpha_1 \beta_0 = $}] {};
\node at (0,-1.5) {$0$};
\node at (0,1.5) {$t_{10}t_{12}$};
\node at (1.3,-0.7) {$0$};
\node at (-1.3,-0.7) {$t_{10}t_{20}$};
\node at (1.3,0.7) {$0$};
\node at (-1.3,0.7) {$t_{10}^2$};
\node at (1.9,0) [label=right:{ , }] {};
\end{tikzpicture}
}
\end{figure}

\noindent we have that $\deg(\alpha\beta)$ equals
\[
 \frac{t_{02}t_{10}}{t_{02}t_{10}} + 
 \frac{t_{01}t_{20}}{t_{01}t_{20}} = 2
 \text{ and } \qquad
 \frac{t_{10}t_{12}}{t_{02}t_{10}} +
  \frac{t_{10}^2}{t_{12}t_{20}} + 
  \frac{t_{10}t_{20}}{t_{21}t_{10}} = 2,
\]
where the first computation is immediate and the second involves a fun cancellation. 
\end{example}

\subsection{The Chow ring in terms of Minkowski weights}\label{sec:MW}



\noindent \textbf{\textsf{The Chow ring.}}
A \defword{$k$-dimensional Minkowski weight} on $\Sigma$ is a real-valued function
$\omega$ on the set $\Sigma(k)$ of $k$-dimensional cones that satisfies the \defword{balancing condition}: For every $(k-1)$-dimensional cone $\tau$ in $\Sigma$,
\[
\text{$\sum_{\tau \subset \sigma} \omega(\sigma) \mathbf{e}_{\sigma/\tau}=0$ in the quotient space $\N/ \span(\tau)$,}
\]
where $\mathbf{e}_{\sigma/\tau}$ is the primitive generator of the ray $( \sigma+ \span(\tau) ) / \span(\tau)$.
We say that $w$ is \defword{positive} if $w(\sigma)$ is positive for every $\sigma$ in $\Sigma(k)$.
We write $\MW_k(\Sigma)$ for the space of $k$-dimensional Minkowski weights on $\Sigma$, and set 
$
\MW(\Sigma)=\bigoplus_{k \ge 0} \MW_k(\Sigma).
$

The product in $\MW(\Sigma)$ is given by the following \defword{fan displacement rule}. If $X_1$ and $X_2$ are Minkowski weights of codimension $k$ and $\ell$ on $\Sigma$, then their product is defined to be the \defword{stable intersection}
\[
X_1 \cdot X_2 \coloneqq \lim_{\epsilon \rightarrow 0} X_1 \cdot (X_2 + \epsilon \v)
\]
for any vector $\v \in \N$ such that $X_1$ and $X_2 + \epsilon \v$ intersect transversally for sufficiently small $\epsilon > 0$. The facets of $X_1 \cdot X_2$ are the $(k+\ell)$--codimensional intersections of a facet of $X_1$ and a facet of $X_2$. The weight of a facet $\tau$ of $X_1 \cdot X_2$ is
\[
w(\tau) = \sum_{\sigma_1, \sigma_2} w(\sigma_1) w(\sigma_2) [\ZZ^n : L_{\ZZ}(\sigma_1) + L_{\ZZ}(\sigma_2)],
\]
summing over the facets $\sigma_1$ and $\sigma_2$ of $X_1$ and $X_2$ respectively such that $\tau = \sigma_1 \cap \sigma_2$ and $\sigma_1 \cap (\sigma_2 + \epsilon \v) \neq 0$ for small $\epsilon >0$. 
It is non-trivial that the construction above is independent of the choice of a (generic) vector $\v$, and that it is also a Minkowski weight, that is, it satisfies the balancing condition \cite{FultonSturmfels, JensenYu}.

When $\Sigma$ is complete, 
Fulton and Sturmfels \cite{FultonSturmfels} proved that
\[
A(\Sigma) \cong \MW(\Sigma)
\]
so understanding the Chow ring of $\Sigma$ is equivalent to understanding  Minkowski weights on $\Sigma$ and their stable intersections.

%

\medskip

\noindent \textbf{\textsf{The dictionary.}}
For $\Sigma$ complete, 
Katz and Payne \cite{KatzPayne} described the canonical\footnote{This is canonical in the sense that $\PP(\Sigma) \cong A_T(X_\Sigma)$ and $\MW(\Sigma) \cong A(X_\Sigma)$ are isomorphic to the equivariant and the ordinary Chow cohomology rings of the toric variety $X_\Sigma$, respectively, and there is a canonical map $ A_T(X_\Sigma) \rightarrow A(X_\Sigma)$.} map from $\PP(\Sigma)$ to $\MW(\Sigma)$ that descends to an isomorphism $A^k(\Sigma) \cong \MW_{n-k}(\Sigma)$. 
We focus on a different description for a special case: when $f \in \PP^1(\Sigma)$ is a piecewise linear function that is convex, that is, $f((\x+\y)/2) \leq (f(\x)+f(\y))/2$ for all $\x,\y \in \N$. In this case, $f$ can be written as a \defword{tropical polynomial}; that is, the maximum of a finite number of linear functions:
\[
f(\x) = \max\{\v_1(\x), \ldots, \v_m(\x)\} \qquad \text{ for }  \v_1, \ldots, \v_m \in \N^\vee.
\]
The \defword{corner locus}, where this function is not linear, is the \defword{tropical hypersurface}:
\[
\trop f = \{\x \in \N \, : \, \max_{1 \leq i \leq m} \{\v_i(\x)\} \text{ is achieved at least twice} \}.
\]
This is the $(n-1)$-skeleton of the normal fan of the \defword{Newton polytope} $\Newt(f) = \conv(\v_1, \ldots, \v_m)$. It turns into a balanced fan with a natural choice of weights: for each facet $F$ of $\trop f$ the weight $w(F) = \ell(F^\vee)$ equals the lattice length of the corresponding edge of $\Newt(f)$. This balanced fan is the Minkowski weight in $\MW_{n-1}(\Sigma)$ corresponding to $f$. 
For details, see \cite{MaclaganSturmfels, McMullen, MikhalkinRau}.

\begin{example} (The Chow ring $A(\Sigma_{[0,2])})$.)
Let $\Sigma=\Sigma[0,2]$. For $k=0$ the balancing condition is vacuous and a Minkowski weight is a choice of a weight on the origin. For $k=1$, we need to put a weight on each of the six rays so that the weighted sum of the rays is $0$. The four choices of weight below generate all others. For $k=2$ we need weights on each maximal cone of $\Sigma$. Each ray $\tau$  is in two cones $\sigma_1$ and $\sigma_2$ which satisfy $\e_{\sigma_1/\tau} = -\e_{\sigma_2/\tau}$, so the balancing condition says that $w(\sigma_1)=w(\sigma_2)$, and hence all weights are equal. Thus $\MW(\Sigma)$ is spanned by the following Minkowski weights:

\begin{figure}[H]
\centering
\scalebox{.4}
{
\begin{tikzpicture}[root/.style={circle,draw,inner sep=1.2pt,fill=black},every node/.style={scale=1.5}]
\node[circle,draw,inner sep=1.2pt,fill=black] at (0,0)[label=above left:{$1$}] {};
\node at (0,-1.5) {$$};
\node at (0,1.5) {$$};
\node at (1.3,-0.7) {$$};
\node at (-1.3,-0.7) {$$};
\node at (1.3,0.7) {$$};
\node at (-1.3,0.7) {$$};
\end{tikzpicture}
}
\qquad \qquad
\scalebox{.4}
{
\begin{tikzpicture}[root/.style={circle,draw,inner sep=1.2pt,fill=black},every node/.style={scale=1.5}]
\draw[style=ultra thick,color=gray] (-1,1.72) -- (1,-1.72);
\node[circle,draw,inner sep=1.2pt,fill=black] at (0,0) {};
\node at (0,-1.5) {$1$};
\node at (0,1.5) {$$};
\node at (1.3,-0.7) {$$};
\node at (-1.3,-0.7) {$$};
\node at (1.3,0.7) {$$};
\node at (-1.3,0.7) {$1$};
\end{tikzpicture}
}
\scalebox{.4}
{
\begin{tikzpicture}[root/.style={circle,draw,inner sep=1.2pt,fill=black},every node/.style={scale=1.5}]
\draw[style=ultra thick,color=gray] (-1,-1.72) -- (1,1.72);
\node[circle,draw,inner sep=1.2pt,fill=black] at (0,0) {};
\node at (0,-1.5) {$1$};
\node at (0,1.5) {$$};
\node at (1.3,-0.7) {$$};
\node at (-1.3,-0.7) {$$};
\node at (1.3,0.7) {$1$};
\node at (-1.3,0.7) {$$};
\end{tikzpicture}
}
\scalebox{.4}
{
\begin{tikzpicture}[root/.style={circle,draw,inner sep=1.2pt,fill=black},every node/.style={scale=1.5}]
\draw[style=ultra thick,color=gray] (2,0) -- (-2,0);
\node[circle,draw,inner sep=1.2pt,fill=black] at (0,0) {};
\node at (0,-1.5) {};
\node at (0,1.5) {};
\node at (1.3,-0.7) {$1$};
\node at (-1.3,-0.7) {$1$};
\node at (1.3,0.7) {};
\node at (-1.3,0.7) {};
\end{tikzpicture}
}
\scalebox{.4}
{
\begin{tikzpicture}[root/.style={circle,draw,inner sep=1.2pt,fill=black},every node/.style={scale=1.5}]
\draw[style=ultra thick,color=gray] (2,0) -- (0,0);
\draw[style=ultra thick,color=gray] (-1,1.72) -- (0,0);
\draw[style=ultra thick,color=gray] (-1,-1.72) -- (0,0);
\node[circle,draw,inner sep=1.2pt,fill=black] at (0,0) {};
\node at (0,-1.5) {};
\node at (0,1.5) {$1$};
\node at (1.3,-0.7) {$1$};
\node at (-1.3,-0.7) {};
\node at (1.3,0.7) {};
\node at (-1.3,-0.7) {$1$};
\end{tikzpicture}
}
\qquad \qquad
\scalebox{.4}
{
\begin{tikzpicture}[root/.style={circle,draw,inner sep=1.2pt,fill=black},every node/.style={scale=1.25}]
\draw[style=ultra thick,color=gray] (2,0) -- (-2,0);
\draw[style=ultra thick,color=gray] (0,0) -- (-2,0);
\draw[style=ultra thick,color=gray] (-1,-1.72) -- (1,1.72);
\draw[style=ultra thick,color=gray] (1,-1.72) -- (-1,1.72);
\node[circle,draw,inner sep=1.2pt,fill=black] at (0,0) {};
\node at (0,-1.5) {$1$};
\node at (0,1.5) {$1$};
\node at (1.3,-0.7) {$1$};
\node at (-1.3,-0.7) {$1$};
\node at (1.3,0.7) {$1$};
\node at (-1.3,0.7) {$1$};
\end{tikzpicture}
}
\end{figure}
\end{example}

\medskip

\noindent \textbf{\textsf{Computing degrees.}}
One can use the \defword{fan displacement rule} to compute the degree of a product: $X_1 \cdot X_2 = \lim_{\epsilon \rightarrow 0} X_1 \cdot (X_2 + \epsilon \v)$, where $\v \in \N$ is any vector such that $X_1$ and $X_2 + \epsilon \v$ intersect transversally for sufficiently small $\epsilon > 0$. This requires one to understand how these fans intersect by solving systems of linear equations and inequalities. Sometimes a clever  choice of $\v$ -- for example one whose coordinates increase very quickly -- can simplify the computations.

\medskip

\noindent \textbf{\textsf{The hyperplane and reciprocal hyperplane classes.}}
In $\Sigma_E = \Sigma_{[0,n]}$, the Minkowski weights of $\alpha = \alpha_i = \max(t_i - t_j \, : \, j \in E)$ and $\beta = \beta_i = \max(t_j - t_i \, : \, j \in E)$ are the $(n-1)$-skeleta of the normal fans of $\Newt(\alpha_i) = \e_i-\Delta_E$ and $\Newt(\beta_i) = \Delta_E-\e_i$ where $\Delta_E = \conv(\e_0, \ldots, \e_n)$ is the standard simplex. 
Explicitly, the facets of $\alpha$ and $\beta$ are:
\begin{eqnarray*}
\alpha: &&
\{\sigma_{i_0 \subset i_0i_1 \subset \cdots \subset i_0i_1\ldots i_{n-2}} \, : \, i_0i_1 \ldots i_{n-1}i_n \text{ permutation of } E\} \\
 \beta: && 
\{\sigma_{i_0i_1 \subset \cdots \subset i_0i_1\ldots i_{n-1} \subset i_0i_1\ldots i_{n-1}} \, : \, i_0 i_1 \ldots i_{n-1}i_n \text{ permutation of } E\}
\end{eqnarray*}
with unit weights on all facets. The supports of these fans are
\begin{eqnarray*}
|\alpha| &=& \{\t \in \N_E \, : \, \min_{i \in E} t_i \text{ is achieved at least twice}\} \\
|\beta| &=& \{\t \in \N_E \, : \, \max_{i \in E} t_i \text{ is achieved at least twice}\}.
\end{eqnarray*}
Notice that $\min_{i \in E} t_i$ is not a well defined function on $\N_E = \RR^E / \RR\e_E$, but whether  or not this minimum is achieved at least twice \textbf{is} well defined; similarly for $\beta$.

\begin{example} (The degree of $\alpha \beta$ in $A(\Sigma_{[0,2])}$.)
In $A(\Sigma_{[0,2]})$, we can draw the Minkowski weights of $\alpha$ and $\beta$ using the description obtained above. 
Alternatively, we can look at the expressions for $\alpha$ and $\beta$ as piecewise linear functions in Section \ref{sec:PP} and draw their corner loci, where the functions are not locally linear.

\begin{figure}[H]
\centering
\scalebox{.4}
{
\begin{tikzpicture}[root/.style={circle,draw,inner sep=1.2pt,fill=black},every node/.style={scale=1.5}]
\node at (-1.9,0) [label=left:{$\alpha=$}] {};
\draw[style=ultra thick,color=gray] (2,0) -- (0,0);
\draw[style=ultra thick,color=gray] (-1,1.72) -- (0,0);
\draw[style=ultra thick,color=gray] (-1,-1.72) -- (0,0);
\end{tikzpicture}
}
\quad
\scalebox{.4}
{
\begin{tikzpicture}[root/.style={circle,draw,inner sep=1.2pt,fill=black},every node/.style={scale=1.5}]
\node at (-1.9,0) [label=left:{$\beta=$}] {};
\draw[style=ultra thick,color=gray] (-2,0) -- (0,0);
\draw[style=ultra thick,color=gray] (1,-1.72) -- (0,0);
\draw[style=ultra thick,color=gray] (1,1.72) -- (0,0);
\end{tikzpicture}
}
\end{figure}

\noindent
Using these Minkowski weights, we compute the degree of $\alpha \beta$ in $A(\Sigma_{[0,2])}$ in two ways:

\begin{figure}[H]
\centering
\scalebox{.4}
{
\begin{tikzpicture}[root/.style={circle,draw,inner sep=1.2pt,fill=black},every node/.style={scale=1.5}]
\node at (-1.9,0) [label=left:{$\alpha \cap (\beta + \epsilon_1v_1)=$}] {};
\draw[style=ultra thick,color=gray] (2,0) -- (0,0);
\draw[style=ultra thick,color=gray] (-1,1.72) -- (0,0);
\draw[style=ultra thick,color=gray] (-1,-1.72) -- (0,0);
\draw[style=ultra thick,color=gray] (-2,0.6) -- (0,0.6);
\draw[style=ultra thick,color=gray] (1,-1.12) -- (0,0.6);
\draw[style=ultra thick,color=gray] (1,2.32) -- (0,0.6);
\node[circle,draw,inner sep=1.2pt,fill=black] at (-0.35,0.6) {};
\node[circle,draw,inner sep=1.2pt,fill=black] at (0.35,0) {};
\end{tikzpicture}
}
\quad
\scalebox{.4}
{
\begin{tikzpicture}[root/.style={circle,draw,inner sep=1.2pt,fill=black},every node/.style={scale=1.5}]
\node at (-1.9,0) [label=left:{$\alpha \cap (\beta + \epsilon_2v_2)=$}] {};
\draw[style=ultra thick,color=gray] (2,0) -- (0,0);
\draw[style=ultra thick,color=gray] (-1,1.72) -- (0,0);
\draw[style=ultra thick,color=gray] (-1,-1.72) -- (0,0);
\draw[style=ultra thick,color=gray] (-2.5,0) -- (-0.5,0);
\draw[style=ultra thick,color=gray] (0.5,-1.72) -- (-0.5,0);
\draw[style=ultra thick,color=gray] (0.5,1.72) -- (-0.5,0);
\node[circle,draw,inner sep=1.2pt,fill=black] at (-0.25,-0.45) {};
\node[circle,draw,inner sep=1.2pt,fill=black] at (-0.25,0.45) {};
\end{tikzpicture}
}
\end{figure}
\noindent
In each case the index of intersection is $1$, so $\deg(\alpha \beta) = 2$.

\end{example}

\subsection{The Chow ring in terms of tropical intersection}

\noindent \textbf{\textsf{The Chow ring.} }
When $\Sigma$ is complete, 
there is an alternative description of the product in the Chow ring that combines piecewise polynomials and Minkowski weights \cite{AllermannRau, KatzTropInt, Mikhalkin, Rau}.
Let $w \in \MW_k(\Sigma)$ be a Minkowski weight  on $\Sigma$, and $f \in  A^1(\Sigma)$ be a piecewise linear function (modulo global linear functions) on $\Sigma$, regarded as a codimension 1 Minkowski weight. The
Minkowski weight $f \cdot w \in \MW_{k-1}(\Sigma)$ is given by
\begin{equation} \label{eq:cap}
f \cdot w \,\, (\tau) \coloneqq  
\sum_{\stackrel{\sigma \in \Sigma^{(k)}}{\sigma \supset \tau}}  f(w(\sigma) \e_{\sigma/\tau})  - 
f \left(\sum_{\stackrel{\sigma \in \Sigma^{(k)}}{\sigma \supset \tau}}  w(\sigma) \e_{\sigma/\tau}\right)
\end{equation}
for each $(k-1)$-cone $\tau$ of $\Sigma$. 
In tropical geometry, the Minkowski weight  $f \cdot w$ is known as the \defword{divisor} $\div_w(f)$. Intuitively, it measures the non-linearity of $f$ on $w$. In particular, if $w$ is linear on $f$ locally around $\tau$, then the divisor equals $0$ at $\tau$.

\medskip

\noindent \textbf{\textsf{The dictionary.}} 
Underlying this description is an isomorphism $\MW(\Sigma) \simeq \Hom(A(\Sigma),\RR)$ given by the maps
\begin{eqnarray*}
\MW_k(\Sigma) &\simeq&  \Hom(A^k(\Sigma),\RR) \\
w  & \longmapsto & (x_\sigma \mapsto w(\sigma)/\mult(\sigma) \text{ for each $k$-face } \sigma)
\end{eqnarray*}
for any simplicial $\Sigma$ \cite{AdiprasitoHuhKatz, ArdilaDenhamHuh1}.
This isomorphism gives $\MW(\Sigma)$ the structure of a graded $A(\Sigma)$-module. 
\footnote{The map $\cdot: A(\Sigma) \times \MW(\Sigma) \rightarrow \MW(\Sigma)$ is sometimes called the \defword{cap product} $\cap$.} 
When $\Sigma$ is complete, 
we can compute a product $w_1w_2$ by regarding $w_1 \in \MW_{n-n_1}(\Sigma)$ as the image of $f_1 \in A^{n_1}(\Sigma)$ under the isomorphism of Section \ref{sec:MW}, and letting it act on $w_2 \in \MW_{n-n_2}$ to obtain $w_1w_2 = f_1\cdot w_2 \in \MW_{n-n_1-n_2}$. 
\medskip

\noindent \textbf{\textsf{Computing degrees.} }
Since $A$ is generated in degree $1$, we can iterate \eqref{eq:cap}  to compute the product of any two Minkowski weights. In particular, this gives a method for computing $\deg(w_1 \cdots w_k)$ for any $w_i \in \MW_{n-n_i}(\Sigma)$ with $n_1+\cdots+n_k=n$.

\medskip

\noindent \textbf{\textsf{The hyperplane and reciprocal hyperplane classes.}}
From the description of $\alpha$ and $\beta$ as Minkowski weights in $\MW_{n-1}(\Sigma_E) \cong \Hom(A^{n-1}(\Sigma_E),\RR)$, we get representations of $\alpha$ and $\beta$ in $\Hom(A^{n-1}(\Sigma_E),\RR)$ as
\begin{eqnarray*}
\alpha (x_{\F}) &=& \begin{cases}
1 & \text{ if } \F=\{i_0 \subset i_0i_1 \subset \cdots \subset i_0i_1\ldots i_{n-2}\} \\
0 & \text{ otherwise, }
\end{cases} \\
\beta (x_{\F}) &=& \begin{cases}
1 & \text{ if } \F=\{i_0i_1 \subset \cdots \subset i_0i_1\ldots i_{n-2} \subset i_0i_1\ldots i_{n-2}i_{n-1}\} \\
0 & \text{ otherwise. }
\end{cases}
\end{eqnarray*}
We invite the reader to check that these are precisely the results of multiplying $x_{\F}$ by $\alpha, \beta \in A^1(\Sigma_E)$, as described in Section \ref{sec:A}.

\begin{example}
(The degree of $\alpha\beta$ in $\Sigma_{[0,2]}$.)
Let's regard $\alpha$ as  a piecewise linear function and $\beta$ as a Minkowski weight:

\begin{figure}[H]
\centering
\scalebox{.4}
{
\begin{tikzpicture}[root/.style={circle,draw,inner sep=1.2pt,fill=black},every node/.style={scale=1.5}]
\draw[style=thin,color=gray] (2,0) -- (-2,0);
\draw[style=thin,color=gray] (-1,-1.72) -- (1,1.72);
\draw[style=thin,color=gray] (1,-1.72) -- (-1,1.72);
\node at (-1.9,0) [label=left:{$\alpha_0 = $}] {};
\node at (0,-1.5) {$t_{01}$};
\node at (0,1.5) {$t_{02}$};
\node at (1.3,-0.7) {$t_{01}$};
\node at (-1.3,-0.7) {$0$};
\node at (1.3,0.7) {$t_{02}$};
\node at (-1.3,0.7) {$0$};
\node at (1.9,0) [label=right:{ $ = \min_i(t_0-t_i) \in A^1(\Sigma),$}] {};
\end{tikzpicture}
}
\qquad
\scalebox{.4}
{
\begin{tikzpicture}[root/.style={circle,draw,inner sep=1.2pt,fill=black},every node/.style={scale=1.5}]
\node at (-1.9,0) [label=left:{$\beta= \,\,\, \e_{12}$}] {};
\draw[style=ultra thick,color=gray] (-2,0) -- (0,0);
\draw[style=ultra thick,color=gray] (1,-1.72) -- (0,0);
\draw[style=ultra thick,color=gray] (1,1.72) -- (0,0);
\node at (1.5,0) [label=right:{ $ \in \MW_1(\Sigma).$}] {};
\node at (.8,-0.7) [label={[label distance=5pt]300:{$\e_{02}$}}] {};
\node at (.8,0.7) [label={[label distance=5pt]60:{$\e_{01}$}}] {};
\end{tikzpicture}
}
\end{figure}
\noindent
Then $\alpha \cdot \beta$ is a $0$-dimensional Minkowski weight, whose weight at the origin $\bullet$  is
\begin{eqnarray*}
(\alpha \cdot \beta)(\bullet) &=& \alpha(\e_{12}) + \alpha(\e_{01}) + \alpha(\e_{02}) - \alpha(\e_{12}+\e_{01}+\e_{02}) \\
&=& 1 + 1 + 0 - 0 = 2,
\end{eqnarray*}
so the degree of $\alpha \beta$ is $2$.

\end{example}

\subsection{Morphisms} \label{sec:morphisms}

A \defword{morphism} from a fan $\Sigma$ in $\N=\mathbb{R} \otimes \N_{\mathbb{Z}}$ to a fan $\Sigma'$ in $\N' =\mathbb{R} \otimes \N'_{\mathbb{Z}}$ is an integral linear map from $\N$ to $\N'$ such that the image of any cone in $\Sigma$ is a subset of a cone in $\Sigma'$. In the context of toric geometry, a morphism from $\Sigma$ to $\Sigma'$ can be identified with a toric morphism from the toric variety of $\Sigma$ to the toric variety of $\Sigma'$  \cite[Chapter 3]{CoxLittleSchenck}.

Let  $f\colon\Sigma \to \Sigma'$ be a morphism of simplicial fans.
The pullback of functions defines the \defword{pullback homomorphism} between the Chow rings
\[
f^* \colon A(\Sigma') \longrightarrow A(\Sigma),
\]
whose dual is the \defword{pushforward homomorphism} of Minkowski weights
\[
f_* \colon \MW(\Sigma) \longrightarrow \MW(\Sigma').
\]
Since $f^*$ is a homomorphism of graded rings, $f_*$ is a homomorphism of graded modules.
In other words, the pullback and the pushforward homomorphisms satisfy the \defword{projection formula}
\[
\eta \cap f_* w = f_*(f^* \eta \cap w).
\]
for any $\eta \in A(\Sigma')$ and $w \in \MW(\Sigma)$.

\subsection{Geometry: the cohomology and Chow ring of a toric variety}

%

\noindent \textbf{\textsf{The Chow ring.}}
When $\Sigma$ is complete and simplicial, the ring $A(\Sigma)$ is both the cohomology ring and the Chow ring of the toric variety $X_\Sigma$ \cite{BrionEquivariant, CoxLittleSchenck, FultonToric}\footnote{In \cite{BrionEquivariant}, Brion identifies $A(\Sigma)$ with the Chow group of $X_\Sigma$ with real coefficients. For the existence of the ring structure and the pullback, see \cite{Vistoli}.}:
\[
H^\bullet(X_{\Sigma},\RR) \cong A(X_\Sigma) \cong A(\Sigma)
\] 
The analogous isomorphism also holds over $\ZZ$ when $\Sigma$ is unimodular \cite{Danilov}.

\medskip

\noindent \textbf{\textsf{The dictionary.}}
Under this isomorphism, the class of the torus orbit closure of a cone $\sigma$ in $\Sigma$
is identified with $\mult(\sigma) \, x_\sigma$,
where $x_\sigma$ is the monomial $\prod_{\rho \subseteq \sigma} x_\rho$ and $\mult(\sigma)$ is the index of the sublattice
$
( \sum_{\rho \subseteq \sigma} \mathbb{Z} \e_{\rho})$ in the lattice $\N_\mathbb{Z} \cap ( \sum_{\rho \subseteq \sigma} \mathbb{R} \e_{\rho} ).
$
All the fans appearing in this paper will be unimodular, so $\mult(\sigma)=1$ for every  $\sigma$ in $\Sigma$.

\medskip

\noindent \textbf{\textsf{Computing degrees.} }
In the Chow ring of a general algebraic variety, computing degrees is a rich and subtle problem for which intersection theory provides a powerful toolkit; see for example \cite{Fulton}. In the special case of toric varieties, the previous sections provide several useful methods.

\medskip

\noindent \textbf{\textsf{The hyperplane and reciprocal hyperplane classes.}}
The braid fan $\Sigma_E$ refines the normal fans $\Delta_E$ and $-\Delta_E$ of the standard and inverted simplices $\conv\{\e_i \, : \, i \in E\}$ and $\conv\{-\e_i \, : \, i \in E\}$. This gives morphisms of toric varieties $\pi_1:X_{\Sigma_E} \rightarrow X_{\Delta_E} \cong \mathbb{P}^E$ and  $\pi_2:X_{\Sigma_E} \rightarrow X_{-\Delta_E} \cong \mathbb{P}^E$, where the two copies of $\mathbb{P}^E$ are related to each other by the Cremona transformation $\mathbb{P}^E \dashrightarrow \mathbb{P}^E$ given by $(z_i)_{i \in E} \mapsto (z'_i)_{i \in E}$ where $z'_i = z_i^{-1}$.
The classes 
\[
\alpha_i = \pi_1^*(z_i=0), \qquad \beta_i=\pi_2^*(z'_i=0)
\]
in the Chow ring $A(X_{\Sigma_E})$ are the pullbacks of the hyperplane classes $z_i=0$ and $z'_i=0$ in the respective copies of $A({\mathbb{P}^E})$.

\begin{example}
(The degree of $\alpha\beta$ in $\Sigma_{[0,2]}$.) 
Let's compute the degree of $\alpha \beta$ from first principles. Away from the coordinate subspaces, we need to compute the number of intersections of a generic hyperplane $\alpha$ and a generic reciprocal hyperplane $\beta$:
\[
z_0=az_1+bz_2, \qquad 
\frac1{z_0}=\frac{c}{z_1} + \frac{d}{z_2}.
\]
Setting $z=z_1/z_2$, this system is equivalent to the equation $adz^2 + (ac+bd)z + bc=0$, which has two solutions for generic $a,b,c,d$.
Therefore $\deg(\alpha \beta) = 2$.
\end{example}

\section{Intersection theory of matroids: four approaches} \label{sec:matroid}

\setcounter{subsection}{-1}

\subsection{Matroids, characteristic polynomials, and matroid fans}\label{subsec:matroid}

Let us introduce some basic definitions on matroids, and discuss three combinatorial ways to compute the coefficients $\mu^0, \ldots, \mu^r$ of the reduced characteristic polynomial of $\M$.

\bigskip

A \defword{matroid} $M=(E, r)$ consists of a finite set $E$ and a function $r: 2^E \rightarrow \ZZ$, called the \defword{rank function} such that \\
\smallskip
\noindent (R1) $0 \leq r(A) \leq |A|$ for all $A \subseteq E$, \\
\smallskip
\noindent (R2) $r(A) \leq r(B)$ for all $A \subseteq B \subseteq E$, and \\
\smallskip
\noindent (R3) $r(A)+r(B) \geq r(A\cup B) + r(A \cap B)$ for all $A,B \subseteq E$.

A motivating example is the matroid of a vector configuration $E \subset \FF^d$, whose rank function is given by
\[
r(A) = \dim(\span \, A) \qquad \text{ for } A \subseteq E.
\]
Such a matroid is said to be \defword{linear over} $\FF$.

\bigskip

\noindent
\textbf{\textsf{The lattice of flats.}}
A \defword{flat} of $\M$ is a subset $F \subseteq E$ such that 
$
r(F \cup e) > r(F) \textrm{ for all } e \notin F.
$
We say a flat $F$ is \defword{proper} if it does not have rank $0$ or $r$.
The \defword{lattice of flats}  $L_{\M}$ is the set of flats, partially ordered by inclusion. Its minimum and maximum element are called $\widehat{0}$ and $\widehat{1}$, and its least upper bound and greatest lower bound maps are denoted $\wedge$ and $\vee$.
When $\M$ is the matroid of a vector configuration $E$ in a vector space $V$,  the flats of $\M$ correspond to the subspaces of $V$ spanned by subsets of $E$, as illustrated in Figure \ref{fig:matroid}.

\begin{figure}[h]
 \begin{center}
    \includegraphics[height=2cm]{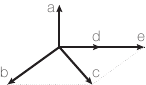} \qquad \quad 
  \includegraphics[height=3cm]{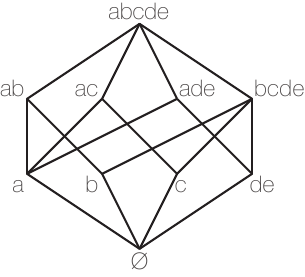} \qquad \quad 
    \includegraphics[height=3.5cm]{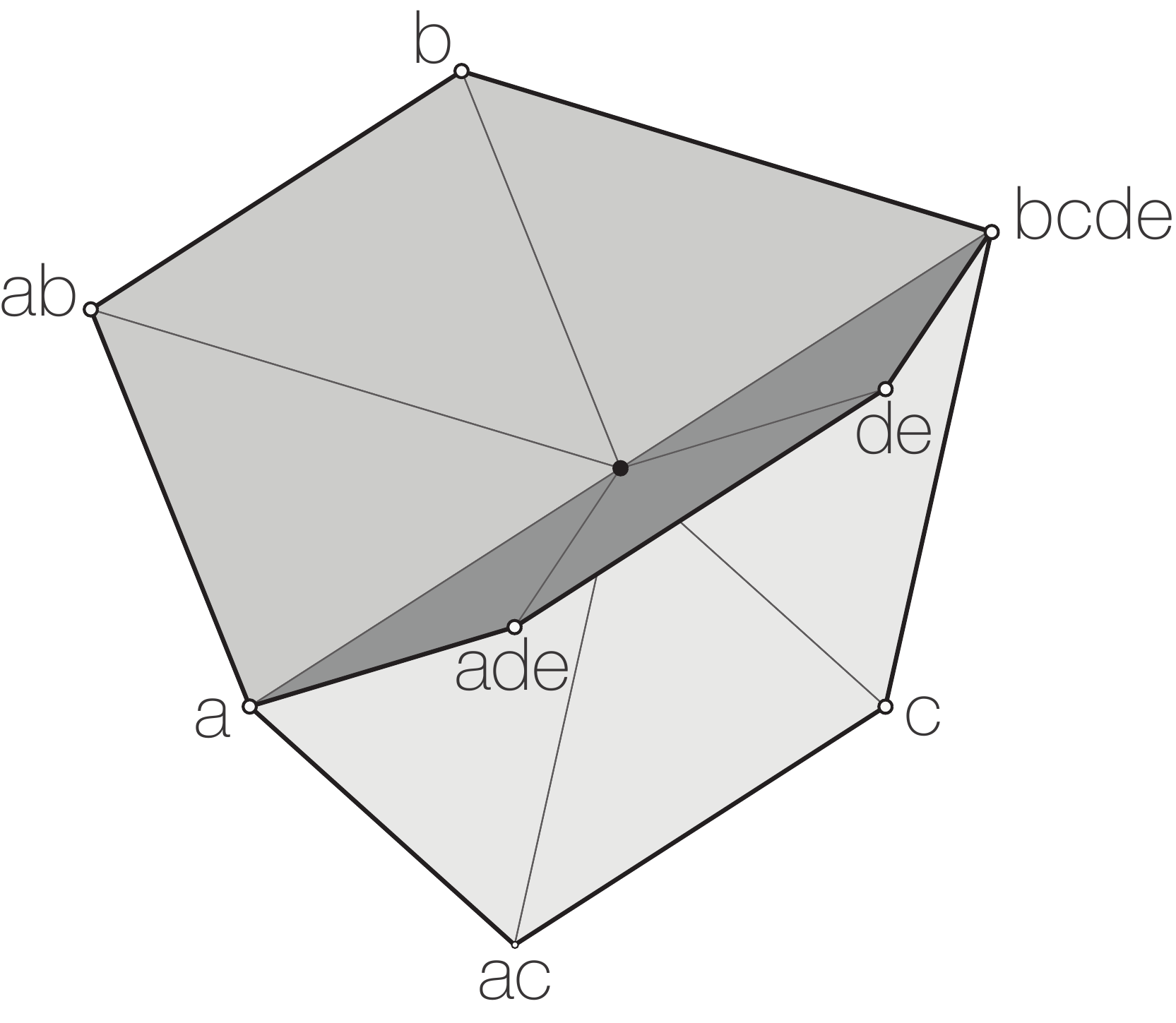}
  \caption{  \label{fig:matroid} A vector configuration, its lattice of flats, and its matroid fan.}
 \end{center}
\end{figure}

The \defword{M\"obius function} of $L_{\M}$ is the function $\mu: L_{\M} \rightarrow {\mathbb{Z}}$ defined by
\begin{equation}\label{f.e:Mobius1}
\sum_{G \leq F} \mu(G) = \begin{cases}
1 & \textrm{ if } F = \widehat{0}, \\
0 & \textrm{ otherwise}.
\end{cases}
\end{equation}
The \defword{M\"obius number} of $\M$ is $\mu(\M) \coloneqq \mu(\widehat{1})$.
The characteristic polynomial, which was defined in terms of the rank function in Section \ref{sec:intro},  can be expressed in terms of the M\"obius function:
\begin{equation}\label{eq:chimu}
\chi_{\M}(q) = \sum_{F \in L_{\M}} \mu(F) q^{r(\M)-r(F)}.
\end{equation}
Whitney's theorem gives the alternative expression in \eqref{eq:whitney}.

\medskip

\noindent
\textbf{\textsf{Matroid constructions and three properties of $\mu^k$.}}
 Let $e$ be an element of $E$.
The \defword{deletion} $\M \backslash e$ and \defword{contraction} $\M/e$ are the matroids on $E-e$ with rank functions
\[
r_{\M \backslash e}(A) = r_{\M}(A) \quad \text{and} \quad 
r_{\M / e}(A) = r_{\M}(A\cup e) - r_{\M}(e) \quad \text{ for } A \subseteq E-e.
\]
If $\M$ is the matroid of a vector configuration $E \subseteq \FF^d$ 
then $\M \backslash e$ and $M/e$ are the matroids of the vector configuration $E-e \subseteq \FF^d$ and its image $\overline{E-e}$ in the quotient vector space $\FF^d/\FF e$. It follows from the definition in Section \ref{sec:intro} that the characteristic polynomial satisfies the \defword{deletion-contraction} recurrence $\chi_{\M}(q) = \chi_{\M \backslash e}(q) - \chi_{\M/e}(q)$, which gives
\begin{equation}\label{eq:mudelcont}
\mu^k(\M) = \mu^{k-1}(\M/e) + \mu^{k}(\M \backslash e) \qquad \text{ for } 0 \leq k \leq r.
\end{equation}

The \defword{truncation} $\Tr \M$ is the rank $r$ matroid obtained from $\M$ by omitting the flats of rank $r$. If $\M$ is the matroid of a vector configuration $E \subseteq \FF^d$ over a field of characteristic $0$, then $\Tr \M$ is the matroid of the projection of $E$ onto a generic hyperplane $H$ of $\FF^d$. It follows from \eqref{eq:chimu} that the first $r-1$ coefficients of $\chi_{\M}(q)$ and $\chi_{\Tr \M}(q)$ match; a simple calculation then gives
\begin{equation}\label{eq:mutrunc}
\mu^k(\M) = (-1)^k \mu(\Tr^{r+1-k}\M) 
 \qquad \text{ for } 0 \leq k \leq r.
\end{equation}

Let's label each edge from $F$ to $G$ in the Hasse diagram of $L_{\M}$ with the element $\min_<(G-F)$. The \defword{Jordan-H\"older sequence} $\pi(\m)$ of a maximal chain $\m$ from $\widehat{0}$ to $\widehat{1}$ is the sequence of labels from the bottom to the top. Its \defword{descent set} records the positions where this sequence decreases: $D(\m)=\{i \in [r] \, : \, \pi(\m)_i > \pi(\m)_{i+1}\}$.  
Stanley proved \cite[Theorem 2.7]{Bjorner} that the number of maximal chains $\m$ whose Jordan-H\"older sequence $\pi(\m)$ has descent set $D(\m) = S$
equals the M\"obius number $(-1)^{|S|+1} \mu((L_{\M})_{S})$ of the rank-selected subposet $(L_{\M})_{S} = \{F \in L_{\M} \, : \, r(F) \in S\}$. In particular, if $S=[k]$ then $(L_{\M})_{[k]}$ is the lattice of flats of ${\Tr^{r+1-k} \M}$. Therefore
\begin{equation} \label{eq:muD}
\mu^k(\M) = \text{\# of maximal chains $\m$ in $L_{\M}$ with descent set } D(\m) = [k].
\end{equation}
This edge labeling is important in the study of the topology of $L_{\M}$; see \cite{Bjorner}.

\medskip

\noindent
\textbf{\textsf{Matroid fans.}}
 Sturmfels \cite{Sturmfels} and Ardila and Klivans \cite{ArdilaKlivans} introduced the \defword{matroid fan} or \defword{Bergman fan} of a matroid:

\begin{Def} \cite{ArdilaKlivans} Let $\M$ be a matroid on ground set $E$.
The \defword{matroid fan} or \defword{Bergman fan} $\Sigma_{\M}$ in $\N_E=\RR^E/\RR\e_E$ has
\begin{itemize}
\item rays: $\displaystyle \e_F$ for the proper flats $\emptyset \subsetneq F \subsetneq E$, and
\item cones: $\sigma_{\F} = \cone(\e_{F_1}, \ldots, \e_{F_k})$ for the flags $\F=(\varnothing \subsetneq F_1 \subsetneq \cdots \subsetneq F_k \subsetneq E)$. 
 \end{itemize}
\end{Def}

If $\M$ has rank $r+1$, the braid fan $\Sigma_{\M}$ is a pure $r$-dimensional subfan of the braid fan $\Sigma_E$. Notice that $\Sigma_E$ is the matroid fan for the \defword{Boolean matroid} where $r(A) = |A|$ for all $A \subseteq E$.

\begin{prop} \label{prop:matroidbalanced}
The matroid fan is balanced with unit weights. 
\end{prop}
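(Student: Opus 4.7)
The plan is to verify the balancing condition directly from the definition of the matroid fan, exploiting the combinatorics of rank-2 intervals in the lattice of flats.

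First, I will describe the ridges (codimension-one cones) and the facets adjacent to each ridge. A facet of $\Sigma_{\M}$ has the form $\sigma_{\F} = \cone(\e_{F_1}, \ldots, \e_{F_r})$ for a complete flag of proper flats $\F = (F_1 \subsetneq \cdots \subsetneq F_r)$, and a ridge $\sigma_{\F'}$ is obtained by removing one flat $F_i$ from such a flag. The facets containing $\sigma_{\F'}$ correspond to proper flats $F$ with $F_{i-1} \subsetneq F \subsetneq F_{i+1}$ and $r(F) = r(F_{i-1})+1$, where by convention $F_0 = \emptyset$ and $F_{r+1} = E$. Next I will identify such $F$ with the atoms of the rank-two matroid $\M|_{F_{i+1}}/F_{i-1}$: they partition $F_{i+1} \setminus F_{i-1}$ into parallelism classes $C_1, \ldots, C_m$, and the intermediate flats are exactly $F_j = F_{i-1} \cup C_j$ for $j = 1, \ldots, m$.

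Second, I will carry out the balancing computation. Working in $\N_E / \mathrm{span}(\sigma_{\F'})$, the relations $\e_{F_{k}} \equiv 0$ for $k \neq i$ hold by definition, and the conventions $\e_\emptyset = 0$ and $\e_E = 0$ in $\N_E = \RR^E/\RR\e_E$ extend this to all $k$ with $0 \le k \le r+1$, $k \neq i$. Since $F_{i+1} \setminus F_{i-1} = \bigsqcup_j C_j$, I compute
\[
\sum_{j=1}^m \e_{F_j} \;=\; \sum_{j=1}^m \bigl(\e_{F_{i-1}} + \e_{C_j}\bigr) \;=\; m\, \e_{F_{i-1}} + \e_{F_{i+1}} - \e_{F_{i-1}} \;\equiv\; 0 \pmod{\mathrm{span}(\sigma_{\F'})}.
\]

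Third, I need to check that the vectors being summed are actually the \emph{primitive} generators of the rays $\sigma_\F/\sigma_{\F'}$, so that the unit weights indeed witness balancing. For this I will argue that $\Sigma_{\M}$ is unimodular: any flag of flats $F_1 \subsetneq \cdots \subsetneq F_r$ can be refined to a complete flag of subsets of $E$, giving a cone of the braid fan $\Sigma_E$, which is unimodular as noted in Section \ref{subsec:braidfan}. Hence $\e_{F_1}, \ldots, \e_{F_r}$ is part of a $\ZZ$-basis of $\N_{E,\ZZ}$, and in particular the image of $\e_{F_j}$ in the quotient lattice is primitive.

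I do not expect any serious obstacle; the substantive step is the rank-two minor identification, which reduces the balancing to the trivial identity $\e_{F_{i+1}} - \e_{F_{i-1}} \equiv 0$ in the quotient. The boundary cases $i = 1$ and $i = r$ require only the observation that $\e_\emptyset$ and $\e_E$ both vanish in $\N_E$, so they fit the same formula.
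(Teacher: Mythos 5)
Your proof is correct and follows essentially the same route as the paper's: both reduce the balancing condition at a ridge $\sigma_{\F'}$ to the fact that the rank-one flats of the loopless rank-two minor $\M[F_{i-1},F_{i+1}]$ partition $F_{i+1}\setminus F_{i-1}$, so the sum of the ray generators telescopes to $\e_{F_{i+1}}-\e_{F_{i-1}}\equiv 0$ modulo $\mathrm{span}(\sigma_{\F'})$. Your added check that the generators are primitive (via unimodularity of the braid fan) is a small point the paper leaves implicit, but it does not change the argument.
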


\begin{proof} Consider any $(r-1)$-face of the braid fan; we can write it as $\tau = \sigma_{\F}$ for $\F=(\varnothing \subsetneq F_1 \subsetneq \cdots F_{i-1}  \subsetneq F_{i+1} \subsetneq \cdots \subsetneq F_r \subsetneq E)$ where $r(F_j)=j$ for all $j$. The  facets of $\Sigma_E$ containing $\tau$ are those of the form $\sigma = \sigma_{\F \cup F}$ for the rank $i$ flats $F$ with $F_{i-1}  \subsetneq F \subsetneq F_{i+1}$.
These correspond to the lines $F-F_{i-1}$ of the rank $2$ matroid $\M[F_{i-1},F_{i+1}] = (M \backslash (E-F_{i+1}))/F_i$, whose union is its ground set $F_{i+1}-F_{i-1}$. Therefore
\[
\sum_{\tau \subset \sigma} w(\sigma)\e_{\sigma/\tau} = 
\sum_{F_{i-1}  \subsetneq F \subsetneq F_{i+1}} \e_{F-F_{i-1}} = 
\e_{F_{i+1}-F_{i-1}} = 0 \text{ in } \N/\span{\tau}.
\]
as desired.
\end{proof}

It follows that we can regard $\Sigma_{\M}$, with unit weights, as a Minkowski weight $1_{\M}$ on the matroid fan $\Sigma_{\M}$ or on the permutahedral fan $\Sigma_E$.


The \defword{Chow ring} $A(\M)$ of $\M$ is the Chow ring $A(\Sigma_{\M})$ of its matroid fan, as defined in Section \ref{sec:A}. Even though $\Sigma_{\M}$ is not complete, $A(\M)$ also has a degree map \cite{AdiprasitoHuhKatz}:
\begin{eqnarray*}
\deg_{\M}: A^r(\M) & \rightarrow & \RR \\
\eta & \mapsto & \eta \cdot 1_{\M}.
\end{eqnarray*}

%

\medskip

\noindent
\textbf{\textsf{The theme.}}
The inclusion $i:\Sigma_{\M} \rightarrow \Sigma_E$ of the matroid fan in the braid fan  is a morphism of fans.
As explained in Section \ref{sec:morphisms}, this gives pullback and pushforward homomorphisms
\[
i^* \colon A(\Sigma_E) \longrightarrow A(\Sigma_{\M}), \qquad i_* \colon \MW(\Sigma_{\M}) \longrightarrow \MW(\Sigma_E)
\]
satisfying the \defword{projection formula} $\eta \cdot i_* w = i_*(i^* \eta \cdot w)$.

The classes $\alpha_E$ and $\beta_E$ of the braid Chow ring $A(\Sigma_E)$ described in Section \ref{sec:toric}
 pull back to the \defword{hyperplane} and \defword{reciprocal hyperplane classes} 
\[
\alpha_{\M} \coloneqq i^*(\alpha_E), \qquad \beta_{\M} \coloneqq i^*(\beta_E)
\]
of the matroid Chow ring $A(\Sigma_{\M})$. Also, 
 the top-dimensional constant Minkowski weight $1_{\M}$ on the matroid fan $\Sigma_{\M}$ pushes forward to the Minkowski weight $i_*(1_{\M}) = \Sigma_{\M}$ on the braid fan $\Sigma_E$. 
The projection formula then gives
\[
\deg_{{\M}}(\alpha_{\M}^{r-k}\beta_{\M}^k) =  \deg_{E} (\Sigma_{\M} \cdot \alpha_E^{r-k}\beta_E^k).
\]
where $\deg_{\M}: A^r(\Sigma_{\M}) \xrightarrow{\sim} \mathbb{R}$ and
$\deg_{E}: A^n(\Sigma_E) \xrightarrow{\sim} \mathbb{R}$ 
are the degree map of $\Sigma_{\M}$ and $\Sigma_E$, respectively. Now we restate our main theme:

\bigskip

\framebox{
\begin{minipage}{12.5cm}

\textbf{Theorem 1.1} \defword{
Let $\M$ be a matroid  of rank $r+1$. Let $\alpha, \beta$ be the  hyperplane and reciprocal hyperplane classes in the Chow ring $A(\M)$. Then}
\[
\deg_{\M}(\alpha^{r-k} \beta^k) = \mu^k(\M) \qquad \text{ for } 0 \leq k \leq r.
\]

\end{minipage}}

\bigskip

\noindent and devote the rest of the paper to four variations on its proof.


\subsection{The Chow ring as a quotient of a polynomial ring}\label{sec:matroidBrion}

Let $\M$ be a loopless matroid  of rank $r+1$ on a set $E$ with $n+1$ elements.
The Chow ring
$A(\M)$ is the $\RR$-algebra generated by variables $x_F$ for each non-empty proper flat and relations
\begin{eqnarray*}
x_Fx_G=0 && \text{ for flats $F, G$ such that $F \subsetneq G$ and $F \supsetneq G$}, \\
\sum_{F \ni i}x_F = \sum_{F \ni j} x_F && \text{ for elements $i, j \in E$}.
\end{eqnarray*}
The Chow ring is graded $A(\M) = A^0(\M) \oplus \cdots \oplus A^r(\M)$, and the isomorphism $\deg_{\M}: A^r(\M) \rightarrow \RR$ 
is characterized by its value on square-free monomials:
\[
\deg(x_{F_1}\cdots x_{F_k}) = \begin{cases}
1 & \text{ if } F_1, \ldots, F_k \text{ form a flag}, \\
0 & \text{ otherwise.}
\end{cases}
\qquad 
\text{ for } F_1, \dots, F_k \text{ distinct.}
\]

In this presentation, the \defword{hyperplane} and \defword{reciprocal hyperplane} classes $\alpha_{\M}$ and $\beta_{\M}$ are given by:
\[
\alpha = \alpha_i =  \sum_{i \in F} x_F, \qquad
\beta = \beta_i =  \sum_{i \notin F} x_F.
\]
As before, that they do not depend on $i$. 

Our goal is to compute the degree of $\alpha^{r-k} \beta^k$ in the Chow ring $A(\M)$. To do so, 
 we seek to express $\alpha^{r-k} \beta^k$ as a sum of square-free monomials, each of which have degree one.
One fundamental feature of this computation, which is simultaneously a challenge and an advantage, is that there are many ways to carry it out. 
We are free to choose any one of the $E$
different expressions for $\alpha$ and $\beta$ to compute. To have control over the computation, we require some structure amidst that freedom. Let us prescribe a precise way of carrying out these computations, in terms of a fixed linear order $<$ on the ground set $E$ of $\M$.

\begin{Def}  \label{def:lexicographic} 
Let $\F = \{\emptyset \subsetneq F_1 \subsetneq \cdots \subsetneq F_k \subsetneq E\}$ be a flag of flats of $\M$.

\noindent
$\bullet$
The \defword{lexicographic expansion of $x_{\F}\,  \alpha$} is the expression
\[
x_{\F} \,\alpha
=
x_{\F} \,\alpha_{e}
=
 \sum_{
F \supset F_k \cup e
} x_{\F} x_{F},
\]
where $e=\min_<(E-F_k)$ is the $<$-smallest element of $E$ that \textbf{is not} in $F_k$. Note that since $e \in F$ and $e \notin F_k$, the new flat $F$ in each term must be the maximal flat in the new flag $\F \cup F$.

\noindent
$\bullet$
The \defword{lexicographic expansion of $x_{\F}\,  \beta$} is the expression
\[
x_{\F} \,\beta
=
x_{\F} \,\beta_{e}
=
 \sum_{
F \subset F_1 - e
} x_F x_{\F} ,
\]
where $e=\min_< F_1$ is the $<$-smallest element of $E$ that \textbf{is} in $F_1$.
Note that since $e \notin F$ and $e \in F_1$, the new flat $F$ must be the minimal flat in the new flag $F \cup \F$. 

\noindent
$\bullet$
The \defword{lexicographic expansion of $x_{\F}\,  \beta^t$} is obtained recursively by multiplying each monomial in the lexicographic expansion of $x_{\F}\,  \beta^{t-1}$ by $\beta$, again using the lexicographic expansion.

\noindent
$\bullet$
The \defword{lexicographic expansion of $x_{\F}\,  \alpha^s \beta^t$} is obtained recursively by multiplying each monomial in the lexicographic expansion of $x_{\F}\,  \alpha^{s-1}\beta^t$ by $\alpha$, again using the lexicographic expansion.
\end{Def}

By construction, these lexicographic expansions are sums of non-zero square-free monomials in $A({\M})$.
We invite the reader to compute the lexicographic expansions of $\alpha^2, \alpha\beta$, and $\beta^2$ for the matroid in Figure \ref{fig:matroid}.
We now describe the outcome of this computation in general.

A flag $\F = \{\emptyset \subsetneq F_1 \subsetneq \cdots \subsetneq F_k \subsetneq E\}$ gives rise to a word 
\[
\m(\F) = m_1m_2 \ldots m_{k+1} \qquad \textrm{ where } m_i = \min_<(F_i - F_{i-1})
\]
and a \defword{descent set}
\[
D(\F) = \{i \in [k] \, : \, m_i > m_{i+1}\}.
\]

\begin{lemma}
The lexicographic expansion of $\alpha^s \beta^t$ is
\[
\alpha^s \beta^t = \sum_{\F \, : \, |\F| = s+t, \, D(\F) = [t] } x_{\F}.
\]
\end{lemma}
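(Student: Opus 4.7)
The plan is to prove the lemma by nested induction: first handle the case $s = 0$ (the expansion of $\beta^t$) by induction on $t$, then extend by induction on $s$. In each stage I would establish both a forward inclusion (every monomial produced by the algorithm has a flag with the claimed descent set) and a backward inclusion (each admissible flag has a uniquely determined history in the recursion, giving coefficient exactly one).

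The engine is the local behavior of the word $\m(\F)$ under the two operations. Multiplying $x_\F$ by $\beta$ prepends to $\F = \{F_1 \subsetneq \cdots \subsetneq F_k\}$ a flat $F \subsetneq F_1$ with $e := \min_< F_1 \notin F$, giving new word $(\min_< F, m_1, \ldots, m_{k+1})$; since $F \subseteq F_1 \setminus \{e\}$, we have $\min_< F > e = m_1$, so a new descent appears at position $1$ while the old descents are shifted rightward by one. Multiplying by $\alpha$ instead appends a flat $F \supsetneq F_k$ with $e := \min_<(E - F_k) \in F$, giving new word $(m_1, \ldots, m_k, m_{k+1}, \min_<(E - F))$; the entry $m_{k+1} = e$ is unchanged and the new final entry strictly exceeds $e$, so an ascent is added at the end and no existing descent is disturbed. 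Iterating these observations forward from $x_\emptyset = 1$ yields the ``$\subseteq$'' direction of the claim. For the reverse direction when $s = 0$, given $\F'$ with $D(\F') = [t]$, I would set $\F^- := \F' \setminus \{F'_1\}$; the descent at position $1$ of $\F'$ forces $\min_< F'_2 \notin F'_1$, which is exactly the condition for $\F'$ to arise from $\F^-$ under the $\beta$-step, and the identity $m^-_j = m'_{j+1}$ immediately yields $D(\F^-) = [t-1]$. Induction on $t$ then closes this case.

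The main obstacle is the reverse direction for general $s$: given $\F = \{F_1 \subsetneq \cdots \subsetneq F_{s+t}\}$ with $D(\F) = [t]$, I must show that the candidate predecessor $\F^- := \F \setminus \{F_{s+t}\}$ really reaches $\F$ under the prescribed $\alpha$-step, i.e.\ that $\min_<(E - F_{s+t-1}) = m_{s+t}$. The key observation is that the ascent chain $m_{t+1} < m_{t+2} < \cdots < m_{s+t+1}$ guaranteed by $D(\F) = [t]$ rules out any stray small element: any $e \in E - F_{s+t-1}$ with $e < m_{s+t}$ would satisfy $e \notin F_{s+t}$ (else $e \in F_{s+t} - F_{s+t-1}$ would give $e \geq m_{s+t}$), so $e \in E - F_{s+t}$, forcing $e \geq m_{s+t+1} > m_{s+t}$, a contradiction. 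The identical argument applied at each stage of the trace-back uniquely determines the history $\F^{(s+t)} \to \F^{(s+t-1)} \to \cdots \to \F^{(t)}$, with $\F^{(t)} = \{F_1, \ldots, F_t\}$ still satisfying $D(\F^{(t)}) = [t]$, at which point the $s = 0$ case supplies the remaining $\beta$-history and completes the proof.
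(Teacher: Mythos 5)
Your argument is correct and follows essentially the same route as the paper's proof: identify the $\beta$-step condition $F \subset F_1 - e$ with an initial descent and the $\alpha$-step condition $F \supset F_k \cup e$ with a final ascent of the word $\m(\F)$, then iterate through the recursion order $1, \beta, \ldots, \beta^t, \alpha\beta^t, \ldots, \alpha^s\beta^t$. You simply make explicit the two inclusions and the uniqueness of the trace-back that the paper compresses into the word ``equivalent.''
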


\begin{proof}
In the terms $x_{F \cup \F}$ of the lexicographic expansion of $x_{\F} \beta$, the condition $F \subset F_1 - e$ is equivalent to $\min F > \min (F_1-F) = e$, that is, to an initial descent in the word of $F \cup \F$.

In the terms $x_{\F \cup F}$ of the lexicographic expansion of $x_{\F} \alpha$, the condition $F \supset F_k \cup e$ is equivalent to $\min (F-F_k) = e > \min (E-F)$, that is, to a final ascent in the word of $\F \cup F$. 

Since the lexicographic expansion in question is computed recursively in the order $1, \beta, \beta^2, \ldots, \beta^t, \alpha \beta^t, \alpha^2\beta^t, \ldots, \alpha^s\beta^t$, its terms correspond to the flags of length $s+t$ whose words have $t$ initial descents and $s$ final ascents.
\end{proof}

\begin{proof} \textbf{ \hspace{-.4cm} 1 of Theorem \ref{thm:main}:}
The flags of length $(r-k)+k$ whose words have $k$ initial descents and $r-k$ final ascents correspond to the maximal chains in the lattice $L_{\M}$ whose Jordan-H\"older sequence has descent set $[k]$. As we discussed in Section \ref{subsec:matroid}, there are $\mu^k$ such flags.
\end{proof}

This first proof of Theorem 1.1 is based on \cite{AdiprasitoHuhKatz}.
For an alternative deletion-contraction proof motivated by the intersection theory of moduli spaces of curves, see \cite{DastidarRoss}.

\subsection{The Chow ring in terms of piecewise polynomials}\label{sec:matroidPP}

For a permutation $\sigma:[0,n] \rightarrow E$, let $B_\sigma$ be the lexicographically smallest basis of $\M$ with respect to the order $\sigma$. It  can be constructed greedily, by sequentially adding each of $\sigma(0), \sigma(1), \ldots, \sigma(n)$ as long as it is independent from the previously added elements.

The following piecewise polynomial functions are representatives for the classes of  $\Sigma_{\M}$, $\alpha$, and $\beta$ in $\PP(\Sigma_E)/N^\vee$. For 
any fixed element $f \in E$, 
\begin{equation} \label{eq:PP}
[\alpha]_\sigma = t_f-t_{\sigma(n)}, \quad 
[\beta]_\sigma = t_{\sigma(0)}-t_f, \quad
[\Sigma_{\M}]_\sigma = (-\t)_{E-B_\sigma} \coloneqq \prod_{i \notin B_\sigma} (t_f-t_i),
\end{equation}
for each permutation $\sigma$ of $E$:
The first two equalities were shown in Section \ref{sec:PP}; for the third, see \cite[Lemma 4.3]{ArdilaEurPenaguiao} and \cite[Theorem 7.6]{BEST}.
Therefore, as explained in Section \ref{sec:PP},
\[
\deg_E(\alpha^{r-k} \beta^k \Sigma_{\M}) = 
\sum_{\sigma \in S_E}
\frac{(t_{\sigma(0)}-t_f)^{k} (t_f-t_{\sigma(n)})^{r-k} \prod_{i \notin B_\sigma} (t_f-t_i)}
{(t_{\sigma(0)} - t_{\sigma(1)})(t_{\sigma(1)} - t_{\sigma(2)}) \cdots (t_{\sigma(n-1)} - t_{\sigma(n)})}
\]
and we need to prove that this rational function equals the constant $\mu^k(\M)$. 

Let us write $\t_\sigma\coloneqq {(t_{\sigma(0)} - t_{\sigma(1)})(t_{\sigma(1)} - t_{\sigma(2)}) \cdots (t_{\sigma(n-1)} - t_{\sigma(n)})}$ for each bijection $\sigma:[0,n]\rightarrow E$, written in ``one-line notation" as the word $\sigma(0)\ldots \sigma(n)$.
Also recall that $t_{ij}\coloneqq t_i-t_j$. 
Finally write $[m_k(\M)] \coloneqq [\alpha^{r-k} \beta^k \Sigma_{\M}]$ regarded as a piecewise polynomial function on $\Sigma_E$, so that
\[
\deg_E(\alpha^{r-k} \beta^k \Sigma_{\M}) = 
\sum_{\sigma \in S_E} \frac{[m_k(\M)]_\sigma}{\t_\sigma}.
\]

\begin{proof} \textbf{ \hspace{-.4cm} 2 of Theorem \ref{thm:main}:}
We will prove that this sum equals $\mu^k(\M)$ by showing
that it satisfies a deletion-contraction recurrence. Let $e$ be an element that is neither a loop nor a coloop.\footnote{A similar analysis, which we omit, will hold when $e$ is a loop or a coloop.}
Each permutation of $E$ can be written uniquely in the form $\tau^i =  \tau(0) \ldots \tau(i-1) \ e \ \tau(i) \ldots \tau(n-1)$
for a permutation $\tau=\tau(0) \ldots \tau(n-1)$ of $E-e$ and an index $0 \leq i \leq n$.
For each permutation $\tau$ of ${E-e}$, there is an element $j \in E-e$ such that 
\[
B_\tau(\M/e) =: B_\tau, \qquad B_\tau(\M \backslash e) = B_\tau \cup \tau(j).
\]
Then we have
\[
B_{\tau^i}(\M) = 
\begin{cases} 
B_\tau \cup e  & \text{ if } i \leq j, \\ 
B_\tau \cup \tau(j)  & \text{ if } i > j. 
\end{cases}
\]

Now we use this to compute the parts of a piecewise polynomial function representing $[m_k(\M)] \coloneqq [\alpha^{r-k} \beta^k \Sigma_{\M}]$ recursively. We use the equations in \eqref{eq:PP} which are valid for any $f \in E$; we will choose $f=e$.\footnote{One can prove the recurrence without making this choice, but that requires additional ideas.}
Notice that 
$[\beta]_{\tau^0}=0$,  
$[\alpha]_{\tau^n}=0$,
and  $[\Sigma_M]_{\tau^i} = 0$ for $i>j$ since $f=e$ and $e \notin B_{\tau^i}.$ 
Therefore
\[
[m_k(\M)]_{\tau^i} = \begin{cases}
(t_{\tau(0)e})^k(t_{e\tau(n-1)})^{r-k} (-\t)_{E-B-e} &  \text{ for } i=1, \ldots, j, \\
0  &  \text{ for } i=0, j+1, \ldots, n-1, n .
\end{cases}
\]

Let us sum the contributions from permutations $\tau^0, \ldots, \tau^n$ to $\deg [m_k(\M)]$:
\begin{eqnarray*}
&& \sum_{i=0}^n 
\frac{[m_k(\M)]_{\tau^i}}{\t_{\tau^i}} \\
&=& \frac{(t_{\tau(0)e})^k(t_{e\tau(n-1)})^{r-k} (-\t)_{E-B-e}}{\t_\tau} 
\left[\sum_{i=1}^j \frac{\t_\tau}{\t_{\tau^i}}\right] \\
&=& \frac{(t_{\tau(0)e})^k(t_{e\tau(n-1)})^{r-k} (-\t)_{E-B-e}}{\t_\tau} 
 \cdot \left[ \frac1{t_{\tau(0)e}} + \frac1{t_{e\tau(j)}}\right]\\
&=& \frac{(t_{\tau(0)e})^{k-1}(t_{e\tau(n-1)})^{r-k} (-\t)_{(E-e)-B}}{\t_\tau} 
+ \frac{(t_{\tau(0)e})^k(t_{e\tau(n-1)})^{r-k} (-\t)_{(E-e)-B - \tau(j)}}{\t_\tau} \\
&=&\frac{[m_{k-1}(\M / e)]_{\tau}}{\t_{\tau}} + \frac{[m_k(\M \backslash e)]_{\tau}}{\t_{\tau}},
\end{eqnarray*}
using the fact that the sum of $\displaystyle \frac{\t_\tau}{\t_{\tau^i}} = 
\frac{t_{\tau(i-1)\tau(i)}}{(t_{\tau(i-1)e})(t_{e\tau(i)})} = \frac1{t_{\tau(i-1)e}} - \frac1{t_{\tau(i)e}}$ telescopes. 

Summing this equality over all $\tau \in S_{E-e}$, we obtain
\begin{eqnarray*}
\deg_E(m_k(\M)) &=& \deg_E(m_{k-1}(\M / e))  + \deg_E(m_k(\M \backslash e)) \\
&=& \mu^{k-1}(\M/e) + \mu^k(\M \backslash e) \\ 
&=& \mu^k(\M)
\end{eqnarray*}
as desired.
\end{proof}

For a similar recursive proof of a much more general statement, see \cite{BEST}.

\subsection{The Chow ring in terms of Minkowski weights}

Let us now compute the degree of $\alpha^{r-k}\beta^k\Sigma_{\M}$  stable intersection of the respective Minkowski weights. We have already described the matroid fan $\Sigma_{\M}$.
The tropical fans of $\alpha^{r-k}$ and $\beta^k$ are
the subfans of $\Sigma_E$ with unit weights and support $\Sigma_{E,r-k}$ and 
$-\Sigma_{E,k}$ where 
\[
\Sigma_{E,i} = \{\x \in \N_E \, : \, \textrm{the smallest $i+1$ coordinates of $\x$ are equal}\}.
\]
For each $S \subseteq E$ we consider the cone of $\N_E$ where the $S$ coordinates are minimal:
\[
\Sigma_{E,S} = \{\x \in \N_E \, : \, x_s \leq x_t \text{ for all } s \in S, t \in E\}.
\]
The relative interiors are pairwise disjoint, and $\displaystyle \Sigma_{E,i} = \bigcup_{|I|=i+1} \Sigma_{E,I}$ for $1 \leq i \leq n$.

\begin{proof} \textbf{ \hspace{-.4cm} 3 of Theorem \ref{thm:main}:}
Let $\a$ and $\b$ be generic vectors with $\a$ decreasing and $\b$ increasing, so $a_0 > a_1 > \cdots > a_n$ and $b_0 < b_1 < \cdots < b_n$
We need to find the points in the intersection $\Sigma_{\M} \cap (\a + \Sigma_{E,r-k}) \cap (\b - \Sigma_{E,k})$.
Let us compute the points

\[
\x \in \sigma_{\F} \cap (\a + \Sigma_{E,I}) \cap (\b - \Sigma_{E,J}), \qquad \text{for } |\F| = r, |I|=r-k+1, \, |J|=k+1.
\]
Let the flag $\F$ induce the ordered set partition $\S = S_1|\cdots|S_{r+1}$ of $E$ with parts $S_i = F_i - F_{i-1}$ for $1 \leq i \leq r+1$.

Since the codimensions of these cones add up to $(n-r)+(r-k)+k = n = \dim \N_E$ and $\a$ and $\b$ are generic, for this intersection to be nonempty, the sets $I$ and $J$ cannot share a pair of elements with each other or with any $S_h$. Therefore the parts $S_1, \ldots, S_{r+1}$ split into three types:

\medskip

(I) $r-k$ parts containing exactly one element of $I$ and none of $J$,

(J) $k$ parts containing no element of $I$ and exactly one of $J$,

(IJ) one part containing exactly one element of $I$ and one element of $J$.

\medskip

We claim that in any part of $\S$, the minimum element is the one belonging to $I$ or $J$. To show the claim, assume $i \in I$ is in a block $S \ni i$ of $\S$. Since $\x-\a \in \Sigma_{E,I}$, \ $(x-a)_i \leq (x-a)_s$ for all $s \in S$, so $a_i = \max \{a_s \, : \, s \in S\}$. Since $\a$ is decreasing, $i = \min \{s \, : \, s \in S\}$. 
An analogous argument shows that any $j \in J$ is the smallest in its block $S \ni j$. 
In particular, the minimum element $m$ of the part of type (IJ) is the element belonging to $I$ \textbf{and} $J$.

Now let us choose the representatives of $\a, \b, \x \in \N_E=\RR^E / \RR\e_E$ that make $a_{m}=b_{m} = x_{m} = 0$. Since
$\x - \a \in \Sigma_{E, I}$ and 
$\x - \b \in -\Sigma_{E, J}$, 
the smallest coordinates of $\x-\a$ and the largest coordinates of $\x-\b$, both achieved at $m \in I \cap J$, equal $0$. It follows that
\[
x_s = 
\begin{cases}
x_i = a_{i}  & \textrm{ for } s, i \in S   \textrm{ of type (I)} \\
x_j = b_{j}  & \textrm{ for } s, j \in S   \textrm{ of type (J)} \\
x_m = 0 & \textrm{ for } s, m \in S \textrm{ of type (IJ)},
\end{cases}
\qquad 
\text{ and } a_e \leq x_e \leq b_e \textrm{ for all } e \in E.
\]

We claim that $m=0$. If  $m>0$, since $\a$ is decreasing and $\b$ is decreasing, we would have
\[
0 = a_m < a_0 \leq b_0 < b_m = 0.
\]
This implies that
\[
a_n < \cdots < a_1 < a_0 = 0 = b_0 < b_1 < \cdots < b_n.
\]
Since $\x \in \sigma_{\F}$, the parts $S$ of type (I), where $x_s = a_i$ for some $i \in I-0$, must come after the part of type (IJ) in $\S$. Analogously, the parts $S$ of type (II) must come before the part of type (IJ) in $\S$.

We conclude that the parts $S_1, \ldots, S_{k}$ are of type (J), $S_{k+1}$ is  of type (IJ), and $S_{k+2}, \ldots, S_{r+1}$ are of type $(I)$. Their respective minimum elements $j_1, \ldots, j_k \in J$, $0 \in I \cap J$, $i_{k+2}, \ldots, i_{r+1}$  have decreasing $x$ coordinates, which means that $j_1\ldots j_{k}0$ is decreasing and $0i_{k+2} \ldots i_{r+1}$ is increasing. Thus $\F$ is a maximal chain whose Jordan-H\"older sequence has descent set $[k]$. As we saw in Section \ref{sec:matroidPP}, there are $\mu^k(\M)$ such chains.

Rereading the proof, the reader will see that the point $\x$ computed above does  provide an intersection point of our tropical fans; it is not difficult to verify that it has index $1$. This completes the proof that the intersection degree is $\mu^k(\M)$.
\end{proof}

\subsection{The Chow ring in terms of tropical intersection}

Let $1 \leq r_1 \leq r_2 \leq r$. The \defword{$(r_1,r_2)$--truncation} of $\M$ is the Minkowski weight $\Sigma_{\M,[r_1,r_2]}$ on the braid fan $\Sigma_E$
whose weight on the cone $\sigma_{\F}$ corresponding to a flag $\F = \{\emptyset \subsetneq F_{r_1} \subsetneq F_{r_1+1} \subsetneq \cdots \subsetneq F_{r_2} \subsetneq E\}$ is
\[
w(\sigma_{\F}) = \begin{cases}
|\mu(F_{r_1})| & \text{ if each $F_i$ is a flat of rank $i$ in $\M$, or} \\
0 & \text{ otherwise}.
\end{cases}
\]

\begin{prop} \label{prop:truncation} \cite{Huhthesis, HuhKatz} Let $\M$ be a matroid of rank $r+1$.
For $1 \leq r_1 < r_2 \leq r+1$,
\[
\alpha \cdot \Sigma_{\M,[r_1,r_2]} = \Sigma_{\M,[r_1,r_2-1]}, \qquad 
\beta \cdot \Sigma_{\M,[r_1,r_2]} = \Sigma_{\M,[r_1+1,r_2]}.
\]
For $1 \leq k \leq r$,
\[
\deg (\alpha \cdot \Sigma_{\M,[k,k]}) =  \mu^{k-1}, \qquad 
\deg (\beta \cdot \Sigma_{\M,[k,k]}) =  \mu^{k}.
\]
In particular, each $(r_1,r_2)$-truncation $\Sigma_{\M,[r_1,r_2]}$ is a balanced fan.
\end{prop}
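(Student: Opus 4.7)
The plan is to prove the two Minkowski-weight identities via the tropical divisor formula of Section~\ref{sec:MW}; the degree identities then follow by specializing to the origin, and balancedness by induction. The argument is symmetric in $\alpha$ and $\beta$, so I focus on $\alpha \cdot \Sigma_{\M,[r_1,r_2]} = \Sigma_{\M,[r_1,r_2-1]}$.

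For each codimension-one cone $\tau$ of $\Sigma_E$ I evaluate the divisor formula using the piecewise-linear representative $\alpha = \alpha_i = \max_j(t_i - t_j)$ and the natural lifts $\e_{\sigma/\tau} = \e_S$ for each new ray $S$. Such a $\tau$ arises as a codimension-one face of a maximal cone $\sigma_{(F_{r_1},\ldots,F_{r_2})}$ of $\Sigma_{\M,[r_1,r_2]}$ by deleting one flat $F_\ell$, and the argument splits into three cases. \textbf{Top case} ($\ell=r_2$): the extending cones correspond to rank-$r_2$ flats $F_{r_2}$ with $F_{r_2-1}\subsetneq F_{r_2}\subsetneq E$; since $F_{r_2-1}$ is a flat, $\M/F_{r_2-1}$ is loopless and its rank-one flats partition $E-F_{r_2-1}$, giving the key identity $\sum_{F_{r_2}}\e_{F_{r_2}}=h\,\e_{F_{r_2-1}}+\e_{E-F_{r_2-1}}$ in $\RR^E$ (with $h$ counting the valid $F_{r_2}$). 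Choosing $i\in F_{r_1}$, the first sum equals $|\mu(F_{r_1})|\cdot h$, while direct evaluation of $\alpha_i$ at $h\,\e_{F_{r_2-1}}+\e_{E-F_{r_2-1}}$ yields $h-1$ (the maximum of $t_i - t_j = h-1$ for $j\in E-F_{r_2-1}$ versus $0$ for $j\in F_{r_2-1}$), giving second term $|\mu(F_{r_1})|\cdot(h-1)$; their difference is $|\mu(F_{r_1})|$, matching $\Sigma_{\M,[r_1,r_2-1]}(\tau)$. \textbf{Middle case} ($r_1<\ell<r_2$): an analogous computation using the rank-two partition of $F_{\ell+1}-F_{\ell-1}$ from the proof of Proposition~\ref{prop:matroidbalanced} makes both terms equal $|\mu(F_{r_1})|\cdot h$, giving divisor zero. \textbf{Bottom case} ($\ell=r_1$): the vanishing of the coefficient on $\e_j$ for $j\notin F_{r_1+1}$ in $\sum_F|\mu(F)|\e_F$ makes $\alpha_i$ of the sum collapse to $c_i := \sum_{F\ni i}|\mu(F)|$, which matches the first term, again giving divisor zero.

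For the degree identity at $\tau = 0$, the same direct calculation yields $\deg(\alpha\cdot\Sigma_{\M,[k,k]}) = \sum_{F_k \ni i}|\mu(F_k)|$ for any $i\in E$. To identify this sum with $\mu^{k-1}(\M)$, I use that $\min E$ always appears as a Jordan--H\"older label in any maximal chain of $L_\M$; combined with the descent characterization~\eqref{eq:muD}, this forces any chain with descent set $[k-1]$ to satisfy $\pi_k = \min E$, hence $F_k \ni \min E$. Stanley's EL-labeling theorem applied to the interval $[\hat 0, F_k]$ then enumerates such chains as $\sum_{F_k \ni \min E}|\mu(F_k)| = \mu^{k-1}$; the identity for general $i$ follows by re-choosing the linear order on $E$ so that $i$ becomes minimal. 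Balancedness of each $\Sigma_{\M,[r_1,r_2]}$ then follows by induction from the balanced $\Sigma_\M=\Sigma_{\M,[1,r]}$ of Proposition~\ref{prop:matroidbalanced}, since tropical divisors of Minkowski weights are Minkowski weights. The main obstacle I anticipate is making the Jordan--H\"older chain-to-flat bijection watertight in the degree step, particularly the order-re-choosing argument for $i\neq\min E$; a fallback is to invoke~\eqref{eq:mutrunc} together with direct Möbius inversion on the rank-selected subposet $(L_\M)_{[k-1]}$.
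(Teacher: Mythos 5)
Your proposal is correct and follows essentially the same route as the paper: both evaluate the tropical (Allermann--Rau) divisor formula cone by cone, using the fact that the flats covering a fixed flat partition its complement so that $\sum \e_F$ collapses to a multiple of $\e_{F_{r_2-1}}$ in $\N_E$, together with a judicious choice of representative $\alpha_i$ --- the paper picks $e\notin G_{s+1}$ so the unwanted cones vanish termwise, while you pick $i\in F_{r_1}$ and let the two terms of the divisor formula cancel, which is equally valid. The only real divergences are minor: you identify $\sum_{G\ni i}|\mu(G)|=\mu^{k-1}$ via the EL-labeling/descent decomposition behind \eqref{eq:muD} where the paper invokes Weisner's theorem, and your appeal to ``symmetry'' for the $\beta$-half is a slight overstatement (the weights $|\mu(F_{r_1})|$ ride on the bottom flat, so multiplying by $\beta$ changes them and one additionally needs the identity $w_j=\mu^{j-1}+\mu^j$ on the interval $[\widehat{0},F_{r_1+1}]$) --- though the paper likewise defers that half to Huh's thesis.
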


\begin{proof}
Let us prove the statements for $\alpha$; the proofs for $\beta$ are similar and can be found in \cite{Huhthesis}. To show that $\alpha \cdot \Sigma_{\M,[r_1,r_2]} = \Sigma_{\M,[r_1,r_2-1]}$, we show that the Minkowski weight $\alpha \cdot \Sigma_{\M,[r_1,r_2]}$ has the correct value on each cone of $\Sigma_{\M,[r_1,r_2-1]}$, and has weight 0 on every other cone.

First, consider a cone $\sigma_{\G}$ in $\Sigma_{\M,[r_1,r_2-1]}$ 
with $\G = \{G_{r_1} \subsetneq \cdots \subsetneq G_{r_2-1}\}$,
and let $F_1, \ldots, F_m$ be the rank $r_2$ flats of $\M$ compatible with $\G$; that is, the flats covering $G_{r_2-1} = G$ in the lattice $L_{\M}$. They correspond to the rank $1$ flats $F_1-G, \ldots, F_m-G$ of  $\M[G,E] = \M/G$, which partition its ground set $E-G$, so 
\[
\e_{F_1} + \cdots + \e_{F_m} = (m-1) \e_{G} + \e_E = (m-1) \e_{G} \qquad \text{ in } \N_E.
\]
The full-dimensional cones of $\Sigma_{\M,[r_1,r_2]}$ containing $\sigma_{\G}$ are $\sigma_{\G \cup F_i}$ for $1 \leq i \leq m$, all with weight $|\mu(G_{r_1})|$. Let us choose an element $e \in G$ \footnote{One will obtain the same answer using $\alpha = \alpha_f$ for any other $f \in E$, but choosing $\e \in G$ simplifies the computation.} and compute the weight of the divisor $\alpha \cdot \Sigma_{\M,[r_1,r_2]}$ on this cone $\sigma_{\G}$, using $\alpha = \alpha_e$:
\begin{eqnarray*}
\alpha \cdot \Sigma_{\M,[r_1,r_2]}(\sigma_{\G}) &\coloneqq& 
\sum_{\F \supset \G} w(\sigma_{\F}) \alpha(\e_{\F/\G}) - \alpha \left(\sum_{\F \supset \G} w(\sigma_{\F}) \e_{\F/\G} \right) \\
&=& {|\mu(G_{r_1})|} \left(  \sum_{j=1}^m \alpha(\e_{F_j}) - \alpha \left(\sum_{j=1}^m \e_{F_j} \right) \right)\\
&=& {|\mu(G_{r_1})|}  \left(m - \alpha \left((m-1)\e_G \right) \right)\\
&=& {|\mu(G_{r_1})|} ,
\end{eqnarray*}
noting that $e \in G \subset F_j$ imply $\alpha(\e_G) = \alpha(\e_{F_j}) = 1$ for all $j$.

Now consider a cone $\sigma_{\G}$ of $\Sigma_{\M,[r_1,r_2]}$ that is \textbf{not} in $\Sigma_{\M,[r_1,r_2-1]}$; let the flats in  $\G = \{G_{r_1} \subsetneq \cdots \subsetneq G_{s-1}  \subsetneq G_{s+1} \subsetneq \cdots \subsetneq G_{r_2}\}$ have ranks $r_1, \ldots, \widehat{s}, \ldots, r_2$ where $s \neq r_2$. Let $F_1, \ldots, F_m$ be the flats of rank $s$ that are compatible with $\M$. Now let us choose an element $e \notin G_{s+1}$ (and hence $e \notin F_j$ for all $j$), and perform the following computation with $\alpha = \alpha_e$
\begin{eqnarray*}
\alpha \cdot \Sigma_{\M,[r_1,r_2]}(\sigma_{\G}) &=& 
\sum_{\F \supset \G} w(\sigma_{\F}) \alpha(\e_{\F/\G}) - \alpha \left(\sum_{\F \supset \G} w(\sigma_{\F}) \e_{\F/\G} \right) \\
&=& \left(  \sum_{j=1}^m w(\sigma_{\G \cup F_j}) \alpha(\e_{F_j}) - \alpha \left(\sum_{j=1}^m w(\sigma_{\G \cup F_j}) \e_{F_j} \right) \right) \\
&=& 0
\end{eqnarray*}
because none of the $\e_{F_j}$s involve the $e$-th coordinate, so $\alpha$ takes a value of 0 on every term in this sum.

Finally, we compute $\alpha \cdot \Sigma_{\M,[k,k]}$, which is supported on the origin $\bullet$. Summing over the flats $G$ of rank $k$ in $\M$, we compute with $\alpha=\alpha_e$ for any $e \in E$:
\begin{eqnarray*}
\alpha \cdot \Sigma_{\M,[k,k]}(\bullet) &=& \sum_{G} w(\e_G) \alpha(\e_{G/\bullet}) - \alpha \left(\sum_{G} w(\e_G) \e_{G/\bullet} \right) \\
 &=& \sum_{G} |\mu(G)| \alpha(\e_G) - \alpha \left(\sum_{G} |\mu(G)| \e_G \right) \\
 &=& \sum_{G \ni e} |\mu(G)|  \\
 &=& |\mu(\Tr^{r+1-k} \M)| = \mu^k,
\end{eqnarray*}
because the balancing condition for $\Sigma_{\M,[k,k]}$ gives $\sum_{G} |\mu(G)|\e_G = 0$ in $\N_E$, and the last step follows from Weisner's theorem:
\[
\text{ For any lattice $L$ and any element $x \neq \widehat{1}$}, \sum_{x \in L \,: \, x \vee e = \widehat{1}} \mu(x) = 0.
\]
applied to the lattice of flats of $\Tr^{r+1-k} \M$ and the atom $e$.
\end{proof}

\begin{proof} \textbf{ \hspace{-.4cm} 4 of Theorem \ref{thm:main}:}
We use Proposition \ref{prop:truncation} to compute the degree of $\alpha^{r-k}\beta^k \cdot \Sigma_{\M} = \alpha^{r-k}\beta^k \cdot \Sigma_{M[1,r]}$. For $k \neq r$ we have
\[
\deg(\alpha \cdot  \alpha^{r-k-1}\beta^k \cdot \Sigma_{M[1,r]}) = 
\deg(\alpha \cdot \Sigma_{M[k+1,k+1]}) = \mu^k
\]
and for $k \neq 1$ we have
\[
\deg(\beta \cdot \alpha^{r-k}\beta^{k-1} \cdot \Sigma_{M[1,r]}) = 
\deg(\beta \cdot \Sigma_{M[k,k]}) = \mu^k
\]
as desired.  
\end{proof}

This final proof of Theorem \ref{thm:main} is based on \cite{Huhthesis, HuhKatz}.

\section{Further developments}\label{sec:further}

We close with a small selection of recent results that highlight a few additional directions and  techniques in the intersection theory of matroids. They all involve one of the most important functions of a matroid: the \emph{Tutte polynomial}
\[
T_{\M}(x,y) = \sum_{A \subseteq E} (x-1)^{r-r(A)} (y-1)^{|A| - r(A)}.
\]
This is a very powerful invariant, because every matroid invariant that satisfies a deletion-contraction recurrence -- for instance, the characteristic polynomial -- can be expressed in terms of the $T_{\M}(x,y)$. Some of the results also involve the beta invariant $\beta(\M)$, which is the coefficient of $x^1y^0$ and of $x^0y^1$ in $T_{\M}(x,y)$. 

Our goal here is to give a brief description of the key combinatorial aspects of these constructions, but each one of them has an elegant geometric origin. To fully understand the motivation, as well as the relevant definitions, we invite the reader to consult the relevant references.

\medskip

\noindent
\textbf{\textsf{Matroid fans and symmetrized Minkowski weights.}} 
Berget, Spink, and Tseng \cite{BergetSpinkTseng} defined the \emph{one-window symmetrized Minkowski weights} $\Phi_{r,k}$ on the permutahedral variety, and proved that
\[
\deg [{\Sigma_{\M}} \cdot \Phi_{r,k}] =  \textrm{coeff. of $q^k$ in } T_{\M}(1,q) 
\]
for $0 \leq k \leq n-r$.
They computed these degrees by finding the stable intersection of the corresponding Minkowski weights, as described in Section \ref{sec:MW}. They introduced the combinatorial framework of ``sliding sets" to describe the relevant intersection points.

\medskip

\noindent
\textbf{\textsf{Chern-Schwartz-MacPherson cycles of a matroid.}} 
L\'opez de Medrano, Rinc\'on, and Shaw \cite{LopezRinconShaw} defined the \emph{$k$-th Chern-Schwartz-MacPherson (CSM) cycle of a matroid $\M$} 
to be the $k$-skeleton of the matroid fan $\Sigma_{\M}$ with weights 
\[
w(\sigma_{\F}) \coloneqq (-1)^{r-k} \prod_{i=0}^k \beta(\M[F_i, F_{i+1}]), \quad  \F = \{\emptyset = F_0 \subset F_1 \subset \cdots \subset F_k \subset F_{k+1} = E\}
\]
where $\beta(\M[F_i, F_{i+1}])$ is the beta invariant of the minor $\M[F_i, F_{i+1}]$ \cite{LopezRinconShaw}. 
They proved that this is a Minkowski weight on $\Sigma_E$, with degree
\begin{equation} \label{eq:degcsm}
\deg_E (\csm_k(\M) \cdot  \alpha^k) = \text{coefficient of $q^k$ in } \overline{\chi}_{\M}(q+1).
\end{equation}
for $0 \leq k \leq r$. 

They gave a deletion-contraction proof, in the context of Minkowski weights. This required describing the CSM cycles of $\M$ in terms of the CSM cycles of the deletion $\M \backslash e$ and the contraction $\M/e$ for an element $e$ that is not a loop or coloop, using the relevant pushforward and pullback maps. 

Ashraf and Backman \cite{AshrafBackman} gave an alternative proof of \eqref{eq:degcsm} using stable intersections of Minkowski weights, relying on the Gioan-Las Vergnas refined activities expansion of the Tutte polynomial \cite{GioanLasVergnas}.

\medskip

\noindent
\textbf{\textsf{The conormal fan of a matroid.}} Ardila, Denham, and Huh \cite{ArdilaDenhamHuh1, ArdilaDenhamHuh2} introduced the \emph{conormal fan} $\Sigma_{\M,\M^\perp}$ of a matroid $\M$. Its analysis required them to go beyond the permutahedral fan, introducing the \emph{bipermutahedral fan} $\Sigma_{E,E}$.

The conormal fan $\Sigma_{\M,\M^\perp}$ has support 
$|\Sigma_{\M,\M^\perp}| = |\Sigma_{\M}| \times |\Sigma_{\M^\perp}|$
in $\N_E \times \N_E$, where $\M^\perp$ is the dual matroid.
It is a subfan of the bipermutahedral fan $\Sigma_{E,E}$, whose Chow ring contains elements 
$\gamma, \delta \in A^1(\Sigma_{E,E})$, such that
\begin{equation}\label{eq:adh}
\deg_{E,E} [{\Sigma_{\M,\M^\perp}} \cdot \gamma^{k} \delta^{n-1-k}] = (-1)^{r-k}  \textrm{coeff. of $q^k$ in } \overline{\chi}_{\M}(q+1)
\end{equation}
for $0 \leq k \leq r$.

They gave two proofs of \eqref{eq:adh}, based on Brion's presentation of the Chow ring as in Section \ref{sec:A}. One describes the lexicographic expansion of $\gamma^{k} \delta^{n-1-k}$ in the Chow ring of $\Sigma_{\M,\M^\perp}$ in terms of the combinatorics of biflats and biflags \cite{ArdilaDenhamHuh2}, relying on work of LasVergnas on basis activities \cite{LasVergnas}. 
The other one shows that the combinatorially intricate CSM cycles of $\Sigma_{\M}$ are ``shadows" of simpler cycles of $\Sigma_{\M,\M^\perp}$ under the pushforward map of Minkowski weights:
\[
\csm_k(\M) = (-1)^{r-k}\pi_*(\delta^{n-k-1} \cdot 1_{\M,\M^\perp})
\]
for $0 \leq k \leq r$, where $1_{\M,\M^\perp}$ is the top-dimensional constant Minkowski weight on $\Sigma_{\M,\M^\perp}$ \cite{ArdilaDenhamHuh1}.
The projection formula then shows that \eqref{eq:degcsm} implies  \eqref{eq:adh}.

\medskip

\noindent
\textbf{\textsf{Matroid valuations in intersection theory.}}
The \emph{matroid polytope} of a matroid $\M$ is
\[
P_{\M} = \conv\{\e_B \, : \, B \text{ is a basis of } \M\} \subset \RR^E.
\]
A \emph{matroid valuation} is a function $\Phi$ from the set of matroids on $E$ to an additive abelian group such that for any subdivision of a matroid polytope $P_{\M}$  into matroid polytopes $P_{\M_1}, \ldots, P_{\M_k}$  we have the inclusion-exclusion relation
\begin{equation}\label{eq:weakval}
\Phi(\M) = \sum_{i=1}^k (-1)^{\dim P_{\M}  - \dim P_{\M_i}} \Phi(\M_i). 
\end{equation}
This property seems restrictive but 
is surprisingly common \cite{ArdilaFinkRincon, ArdilaSanchez, DerksenFink}. 
For example, $H(\M)=\sum_{\sigma \in S_E}(\sigma, r_{\M}(\{\sigma(1)\}), 
r_{\M}(\{\sigma(1), \sigma(2)\}), \ldots, r_{\M}(\{\sigma(1), \ldots, \sigma(n)\})$ is valuative \cite{ArdilaFinkRincon}, and a very broad range of matroid valuations can be built from it. Notice that $H(\M)$ determines $\M$ entirely.

This framework is relevant and useful in the intersection theory of matroids. For example, the Bergman fan $\Sigma_{\M}$ is a matroid valuation, when regarded as a Minkowski weight on $\Sigma_E$ \cite[Theorem 4.5]{LopezRinconShaw} or as the piecewise polynomial \eqref{eq:PP} on $\Sigma_E$ \cite[Theorem 5.4]{ArdilaFinkRincon}, \cite[Proposition 5.6]{BEST}. 
More generally, the CSM cycles of a matroid are also valuative \cite{LopezRinconShaw}.
The multivariate \emph{volume polynomial} of $\M$, which is equivalent to the Chow ring $A(\M)$, is also valuative \cite{Eur}. 

The case of matroid invariants, which satisfy $f(M_1) = f(M_2)$ when $M_1 \cong M_2$, is best understood. Examples include the characteristic and Tutte polynomials and the beta invariant.
Derksen and Fink described the universal valuative matroid invariant $G(\M)=\sum_{\sigma \in S_E}(r_{\M}(\{\sigma(1)\}), 
r_{\M}(\{\sigma(1), \sigma(2)\}), \ldots, r_{\M}(\{\sigma(1), \ldots, \sigma(n)\})$; this is the symmetrization of $H(\M)$ above. They also showed that any valuative matroid invariant $f$ is determined by its value on \emph{Schubert matroids}.

This gives a powerful way to prove an equation $f(\M)=g(\M)$ for all matroids $\M$:

\noindent
1. Prove that $f$ and $g$ are both valuative (e.g. using the techniques of \cite{ArdilaFinkRincon, ArdilaSanchez, DerksenFink}).

\noindent
2. Prove that $f(\M)=g(\M)$ for all \textbf{Schubert} matroids $\M$ (e.g. by a combinatorial argument that uses the structure of Schubert matroids, or by a geometric argument that works for realizable matroids, which include Schubert matroids).

\medskip

\noindent
\textbf{\textsf{Tautological matroid classes.}} 
Berget, Eur, Spink, and Tseng defined the \emph{tautological Chern classes} $c_i(\mathcal{S}^\vee_{\M}), c_i(\mathcal{Q}_{\M}) \in A^i(\Sigma_E)$ in the permutahedral variety. They gave a valuative proof of the identity
\begin{equation}\label{eq:speyer}
\deg_E [c_r({\mathcal{S}^\vee_{\M}})c_{n-r}({\mathcal{Q}_{\M}}) ] = \beta(\M)
\end{equation}
in the spirit of the previous section. Once one understands the relevant definitions, the valuativity of both sides of \eqref{eq:speyer} follows directly from the discussion there.
They then proved \eqref{eq:speyer} algebro-geometrically for all matroids realizable over $\CC$ -- which includes Schubert matroids -- relying on earlier work of Speyer \cite{Speyer}.

More generally, they showed that the intersections of the tautological Chern classes with powers of the classes $\alpha$ and $\beta$ give the following reparameterization of the Tutte polynomial:
\[
\sum \deg_E [\alpha^i \beta^j c_k({\mathcal{S}^\vee_{\M}})c_l({\mathcal{Q}_{\M}}) ] x^iy^jz^kw^l =  \frac{(y+z)^{r+1}(x+w)^{n-r}}{x+y} T_{\M}\left(\frac{x+y}{y+z}, \frac{x+y}{x+w}\right),
\]
summing over all indices with $i+j+k+l=n$. They used the framework of Section \ref{sec:PP}, analyzing how the piecewise polynomials representing
$\alpha^i \beta^j c_k({\mathcal{S}^\vee_{\M}})c_l({\mathcal{Q}_{\M}})$ behave under deletion-contraction. Our proof in Section \ref{sec:matroidPP} was inspired by theirs.

\medskip

\noindent
\textbf{\textsf{Tropical critical points of affine matroids.}} The affine Bergman fan $\widehat{\Sigma}_{\M}$ is the Bergman fan $\Sigma_{\M}$ with an added lineality space $\RR \e_E$ in $\RR^E$. 
An \emph{affine matroid} $(\M, e)$ consts of a matroid $\M$ and a chosen element $e \in E$. The Bergman fan of $(\M, e)$ is
$\widehat{\Sigma}_{(\M,e)} = \{\x \in \RR^{E-e} \, : \, (0,\x) \in \widehat{\Sigma}_{\M}\}$.
Ardila, Eur, and Penagui\~ao gave two proofs of the following formula conjectured by Sturmfels \cite{Sturmfelsetal}:
\begin{equation} \label{eq:sturmfels}
\deg [{\widehat{\Sigma}_{(M,e)} \cdot ( - \widehat{\Sigma}_{(M/e)^\perp})}] = \beta(\M). 
\end{equation}
Their first proof described the stable intersection of the fans $\widehat{\Sigma}_{(M,e)}$ and $- \widehat{\Sigma}_{(M/e)^\perp}$ explicitly, by developing the framework of \emph{arboreal pairs of set partitions} and connecting to Ziegler's $\beta$nbc bases of ordered matroids \cite{Ziegler}. Their second proof wrote down piecewise polynomials representing the left-hand sides of \eqref{eq:speyer} and \eqref{eq:sturmfels}, and showed that their difference is a multiple of a linear function, and hence equal to $0$ in the Chow ring of $\Sigma_E$. Thus \eqref{eq:speyer} implies \eqref{eq:sturmfels}.

%
%
%
%
%
%
%
%
%

%
%
%
%



\thankyou{I would like to thank the organizers of the Clay Mathematics Institute 
and the 2024 British Combinatorics Conference for the invitation to deliver the Clay Lecture and write this accompanying survey.
I am very thankful to many coauthors and friends with whom I have learned the material in this survey, including 
Carly Klivans,
Chris Eur, 
Dusty Ross, 
Felipe Rinc\'on,
Graham Denham,
Johannes Rau,
June Huh,
Kris Shaw, 
Lauren Williams, 
Mont Cordero--Aguilar, and
Ra\'ul Penagui\~ao.
This work was partially supported by United States National Science Foundation grant  DMS-2154279.}


\bibliographystyle{amsplain}
\bibliography{references}

%
%
%
%
%
%

\myaddress


\end{document}